\theoremstyle{plain}
  \newtheorem{theorem}{Theorem}[section]
  \newtheorem{proposition}[theorem]{Proposition}
  \newtheorem{lemma}[theorem]{Lemma}
  \newtheorem{corollary}[theorem]{Corollary}
\theoremstyle{definition}
  \newtheorem{definition}[theorem]{Definition}
\theoremstyle{remark}
\numberwithin{equation}{section}
\def\clap#1{\hbox to 5pt{\hss$#1$\hss}}
\def\umapright#1{\smash{
   \mathop{\longrightarrow}\limits^{#1}}}
\def\rmapdown#1{\Big\downarrow\rlap
   {$\vcenter{\hbox{$\scriptstyle#1$}}$}}
\def\lmapup#1{\llap{$\vcenter{\hbox{$\scriptstyle#1$}}$}
    \Big\uparrow}
\def\tempbaselines
\def\diagram#1{\null\,\vcenter{\tempbaselines
\mathsurround=0pt
    \ialign{\hfil$##$\hfil&&\quad\hfil$##$\hfil\crcr
      \mathstrut\crcr\noalign{\kern-\baselineskip}
  #1\crcr\mathstrut\crcr\noalign{\kern-\baselineskip}}}\,}
\def\pullback#1&#2&#3&#4&#5&#6&#7&#8&{
\diagram{#1&\umapright{#2}&#3\cr
\rmapdown{#4}&&\rmapdown{#5}\cr
#6&\umapright{#7}&#8\cr}}
\def\calC{{\mathcal C}}
\def\calE{{\mathcal E}}
\def\calH{{\mathcal H}}
\def\calP{{\mathcal P}}
\def\calQ{{\mathcal Q}}
\def \End{\mathop{\rm End}\nolimits}
\def \Hom{\mathop{\rm Hom}\nolimits} 
\def \Im{\mathop{\rm Im}\nolimits}
\def \ker{\mathop{\rm Ker}\nolimits} 
\def \op{{\rm op}} 
\def\Rad{\mathop{\rm Rad}\nolimits}
\def\Soc{\mathop{\rm Soc}\nolimits}
\def\clap#1{\hbox to 0pt{\hss$#1$\hss}}
\def\ZZ{{\mathbb Z}}
\begin{document}

\title[Perfect complexes]{The Auslander-Reiten Quiver of Perfect Complexes for a Self-Injective Algebra}

\author{Peter Webb}
\email{webb@umn.edu}
\address{School of Mathematics\\
University of Minnesota\\
Minneapolis, MN 55455, USA}

\subjclass[2020]{Primary 16G70; Secondary 18G80, 20C20}

\keywords{Auslander-Reiten triangle, self-injective algebra, perfect complex}

\begin{abstract}
We consider the homotopy category of perfect complexes for a finite dimensional self-injective algebra over a field, identifying many aspects of perfect complexes according to their position in the Auslander-Reiten quiver. Short complexes lie close to the rim. We characterize the position in the quiver of complexes of lengths 1, 2 and 3, as well as rigid complexes and truncated projective resolutions. We describe completely the quiver components that contain projective modules (complexes of length 1). We obtain relationships between the homology of complexes at different places in the quiver, deducing that every self-injective algebra of radical length at least 3 has indecomposable perfect complexes with arbitrarily large homology in any given degree. We show that homology stabilizes, in a certain sense, away from the rim of the quiver. 
\end{abstract}

\maketitle
\section{Introduction}

Let $D^b(\Lambda\hbox{-proj})$ denote the category of perfect complexes for a finite dimensional algebra $\Lambda$ over a field $k$. The objects in this category are the finite-length chain complexes of finitely generated projective $\Lambda$-modules, and we take homotopy classes of chain maps as the morphisms.
It was shown by Wheeler in \cite{Whe} that, if $\Lambda$ is self-injective with no semisimple summands, all components of the Auslander-Reiten quiver of $D^b(\Lambda\hbox{-proj})$ have shape $\ZZ A_\infty$. We exploit this structure to obtain information about the structure of perfect complexes in relation to their position in their quiver component. 

It turns out that the various homology modules of perfect complexes in a quiver component follow a certain pattern. We find that the homology in any fixed degree stabilizes as we move away from the rim of the quiver (Theorem~\ref{homology-picture}). It means that to each perfect complex there is associated a $\Lambda$-module we call the \textit{stabilization module} of the complex, that is the common value to which the zero homology of the complexes in the quiver component stabilizes. The composition factors of the stabilization module are the union of the compositions factors of all the homology modules of a complex on the rim of its component, twisted by a power of the Nakayama functor.

We deduce that, when $\Lambda$ is a self-injective algebra of radical length at least 3, there are always indecomposable complexes with degree-zero homology of arbitrarily large dimension (Theorem~\ref{big-homology-complexes}). We take this statement to be a quantification of the fact that there are very many indecomposable complexes for such an algebra, even including algebras such as $k[t]/(t^3)$. We note that this conclusion does not mention the Auslander-Reiten quiver or even triangulated categories.

In more detail, we completely describe the components of the Auslander-Reiten quiver that contain a projective module (Theorem~\ref{projective-homology-picture}). For arbitrary components we give information based on the structure of complexes close to the rim of the component. In general, a complex of length $n$ must lie at distance at most $n-1$ from the rim, and the only complexes at the maximal possible distance are completely described and lie in a quiver component containing a projective (Corollary~\ref{distance-n-complexes}).
We describe completely the structure of complexes of length 2 or 3 in terms of their distance from the rim (Corollary~\ref{2-term-on-the-rim} and Proposition~\ref{3-term-complexes-position}).

%\babble{Mention that the AR sequence is the degree zero homology of an AR triangle.}
We find that for each Auslander-Reiten sequence of $\Lambda$ modules there is a three-term perfect complex that has the three modules in the sequence as its homology groups (Corollary~\ref{AR-as-homology}). Given three modules appearing in a short exact sequence it is not, in general, possible to realize them as homology of a perfect complex of length 3 (by \cite{ABIM}, for example), so the fact that it can be done for Auslander-Reiten sequences has some interest. 

In proving this result we describe a fundamental relationship between Auslander-Reiten sequences and Auslander-Reiten triangles of perfect complexes that holds for finite dimensional algebras in general (Proposition~\ref{AR-complex}), which is that the degree zero homology of the Auslander-Reiten triangle terminating at a 2-step projective presentation of a non-projective module is the Auslander-Reiten sequence terminating at that module.

In a final section we show that rigid complexes lie on the rim of their quiver component (Theorem~\ref{rigid-complex-theorem}).

With the exception of the application to cohomological Mackey functors in Theorem~\ref{cohomological-MF-theorem}, all the results in this paper appeared as the second half of the preprint \cite{Webd}, which has been withdrawn. The first half of \cite{Webd} has appeared in \cite{Webb}.

%\babble{Cut this paragraph?}
%Perfect complexes arise naturally in many situations.  They are the compact objects in the bounded derived category $D^b(\Lambda\hbox{-mod})$ \cite[6.3]{Ric}. 
%We mention some particular examples of perfect complexes that motivate us. These include the tilting complexes in Rickard's theory \cite{Ric} of derived equivalences of rings. In the context of free group actions on topological spaces, a CW complex on which a group acts freely has a chain complex which is a perfect complex of modules for the group algebra (see \cite{AS} for instance). In the theory of subgroup complexes of a finite group, the chain complex of the poset of non-identity $p$-subgroups, over a $p$-local ring, is homotopy equivalent to a perfect complex of modules for the group algebra \cite{Weba}. These are just a few examples.

\section{Preliminaries}
\label{preliminaries-section}

We take $\Lambda$ to be a finite dimensional algebra over a field $k$, and much of the time (but not always) $\Lambda$ will be self-injective.
Auslander-Reiten triangles in triangulated categories were introduced by Happel in the 1980s, and his book~\cite{Hap} is a good place to read about them. Happel shows that in the bounded derived category $D^b(\Lambda\hbox{-mod})$ an Auslander-Reiten triangle $X\to Y\to Z\to X[1]$ exists if and only if $Z$ is isomorphic to an indecomposable perfect complex, which happens if and only if $X$ is isomorphic to an indecomposable finite complex of finitely generated injective modules. From this it follows that if $\Lambda$ is self-injective then Auslander-Reiten triangles exist in $D^b(\Lambda\hbox{-proj})$ for arbitrary indecomposable perfect complexes $X$ and for arbitrary indecomposable perfect complexes $Z$. Happel describes the construction of these triangles, and it is expressed in terms of the left derived functor of the Nakayama functor $\nu:\Lambda\hbox{-mod}\to\Lambda\hbox{-mod}$ described in \cite{ASS}. The left derived functor is calculated on a perfect complex by applying $\nu$ to each term and each morphism. The resulting functor $D^b(\Lambda\hbox{-proj})\to D^b(\Lambda\hbox{-inj})$ is also denoted $\nu$.

The first lemma is crucial in describing the homology patterns in a quiver component. It follows from \cite[Lemma 2.1]{Webb} and \cite[Lemma 2.2]{DPW}, where more general assertions appear. In the context of stable module categories of self-injective algebras very closely related results have been proved in \cite[Lemma 3.2]{ES} and \cite[Lemma 1.4]{EK}. We give the short proof.

\begin{lemma}\label{split-long-exact-sequence-lemma}
Let $\Lambda$ be a finite dimensional algebra and let
$$
L\to M\to N\to L[1]
$$
be an Auslander-Reiten triangle in $D^b(\Lambda)$. Assume that $N$ is not the shift of a projective module (a complex with only one nonzero term). Then the associated long exact homology sequence is the splice of short exact sequences
$$0\to H_i(L)\to H_i(M)\to H_i(N)\to 0$$
with zero connecting homomorphisms.
\end{lemma}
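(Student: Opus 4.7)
\medskip

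\noindent\textbf{Proof proposal.}

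The plan is to show that the connecting homomorphism $\delta_i \colon H_i(N) \to H_{i-1}(L)$ vanishes for every $i$, since then the long exact sequence breaks up as claimed. The connecting map is induced, up to the usual shift, by the morphism $\delta \colon N \to L[1]$ appearing in the Auslander-Reiten triangle. So the whole task is to show that $H_i(\delta)=0$ for all $i$.

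First I would pass from homology to $\Hom$ in the derived category using the standard identification
\[
\Hom_\Lambda(P,H_i(X)) \;\cong\; \Hom_{D^b(\Lambda)}(P[i],X)
\]
valid for any finitely generated projective $\Lambda$-module $P$ concentrated in degree $0$ and any complex $X$ (this follows because chain maps out of a projective in degree $0$ see only the degree-$0$ homology of the target). Under this identification, the connecting map $H_i(\delta)$ corresponds to postcomposition with $\delta$ on $\Hom_{D^b(\Lambda)}(P[i],N) \to \Hom_{D^b(\Lambda)}(P[i],L[1])$. Since every element of the module $H_i(N)$ lies in the image of some map $P \to H_i(N)$ with $P$ an indecomposable projective, it suffices to show that for each indecomposable projective $P$ and each $\tilde f \colon P[i] \to N$ in $D^b(\Lambda)$ we have $\delta \circ \tilde f = 0$.

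Now I would invoke the defining property of the Auslander-Reiten triangle: any morphism into $N$ from an indecomposable object which is not a split epimorphism factors through $M \to N$, and hence is killed by $\delta$. The object $P[i]$ is indecomposable, so the only obstruction is that $\tilde f \colon P[i] \to N$ might be a split epimorphism. But since $N$ is indecomposable, that would force $N \cong P[i]$, which is precisely the case excluded by the hypothesis that $N$ is not the shift of a projective module. Therefore $\tilde f$ factors through $M$, and $\delta \circ \tilde f = 0$ as required.

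The main obstacle to watch is simply the bookkeeping in the first step: making sure that under the identification $\Hom_\Lambda(P,H_i(-)) \cong \Hom_{D^b(\Lambda)}(P[i],-)$ the induced connecting map on homology really is postcomposition with $\delta$, and taking care of sign and shift conventions. Once that is set up, the argument reduces cleanly to the AR-property applied with test object $P[i]$, and the hypothesis on $N$ is used exactly at the one place where one must rule out split epimorphisms from an indecomposable projective shift.
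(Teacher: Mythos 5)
Your proof is correct and takes essentially the same approach as the paper: both represent $H_i$ by maps from a shifted projective and then apply the lifting property of the Auslander–Reiten triangle, using the hypothesis on $N$ precisely to rule out that a test map could be a split epimorphism. The only cosmetic difference is that you decompose $\Lambda[i]$ into indecomposable projective summands $P[i]$ before invoking the AR-property, whereas the paper applies the lifting property directly to $\Lambda[i]$ (noting $N$ is not a summand of $\Lambda[i]$), which shortens the argument slightly.
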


\begin{proof}
The homology $H_i$ is a representable functor:
$$
H_i(N)=\Hom_{D^b(\Lambda)}(\Lambda[i],N).
$$
The connecting homomorphisms in the long exact sequence of homology arising from the triangle are all zero, except when $N$ is the shift a projective module, because of the lifting property of the Auslander-Reiten triangle, since $N$ is never a summand of $\Lambda[i]$.
\end{proof}

\begin{definition}
We will call a perfect complex $Q_\bullet$ of $\Lambda$-modules \textit{minimal} if, for all complexes $P_\bullet$ homotopic to $Q_\bullet$, we have $\dim Q_n\le \dim P_n$.
\end{definition}

Indecomposable perfect complexes are minimal, if they are not homotopic to $0$. This follows from the next result, in which we recall some well known basic facts about minimal perfect complexes. They are proved (in the dual context of complexes of injectives) in \cite[B2]{Kra} except for the final statement, which is an easy exercise.

\begin{proposition}
Let $\Lambda$ be a finite dimensional algebra over $k$, and $P_\bullet$ a perfect complex of $\Lambda$-modules. Then $P_\bullet$ is homotopy equivalent to a minimal perfect complex, which is unique up to isomorphism. The minimal complex is a direct summand of every complex in its homotopy class. When $\Lambda$ is self-injective, the minimal complex  has the property that its homology is non-zero in the highest and lowest degrees in which the complex has non-zero terms.
\end{proposition}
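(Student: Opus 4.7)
The plan is to cite \cite[B2]{Kra} for the first three claims (existence, uniqueness, and the summand property of the minimal representative) and focus only on the final assertion about non-vanishing homology at the extreme degrees. Assume $\Lambda$ is self-injective, and let $P_\bullet$ be a minimal perfect complex with $P_i \neq 0$ precisely for $a \leq i \leq b$; the goal is to show $H_a(P_\bullet) \neq 0$ and $H_b(P_\bullet) \neq 0$, which is naturally attacked by contradiction in each case.

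For the top degree, suppose $H_b(P_\bullet) = 0$. Since $P_{b+1} = 0$, this forces the top differential $d_b : P_b \to P_{b-1}$ to be a monomorphism. Because $\Lambda$ is self-injective the projective $P_b$ is also injective, so $d_b$ splits, yielding a decomposition $P_{b-1} \cong d_b(P_b) \oplus P'_{b-1}$ in which $P'_{b-1}$ is projective as a summand of a projective. The two-term complex $P_b \to d_b(P_b)$ concentrated in degrees $b$ and $b-1$ is contractible; using $d_{b-1} \circ d_b = 0$ one checks that $d_{b-1}$ annihilates $d_b(P_b)$, so the entire differential structure from degree $b-1$ onwards respects the decomposition, and the contractible piece is a direct summand of $P_\bullet$ as a chain complex. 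Deleting it produces a perfect complex in the same homotopy class as $P_\bullet$ but with strictly smaller total dimension, contradicting minimality.

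For the bottom degree, the dual argument applies and uses only projectivity rather than injectivity. If $H_a(P_\bullet) = 0$ then $d_{a+1} : P_{a+1} \to P_a$ is an epimorphism onto the projective $P_a$ and hence splits, and a parallel calculation with $d_{a+1} \circ d_{a+2} = 0$ shows that the resulting two-term contractible piece $P_a \to P_a$ in degrees $a+1$ and $a$ is a direct summand of the whole complex; deleting it contradicts minimality once more. The one substantive point, and what I expect to be the only real verification, is this use of $d^2 = 0$ to upgrade a module-level splitting to a splitting of complexes; everything else is bookkeeping. Self-injectivity is invoked only at the upper end, exactly where making a projective module injective is required.
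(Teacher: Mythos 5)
Your proof is correct, and it matches the paper's treatment: the paper also disposes of the first three assertions by citing \cite[B2]{Kra} and explicitly leaves the statement about non-vanishing extreme homology as ``an easy exercise.'' You have supplied that exercise. The argument is the standard ``strip off a contractible two-term summand'' reasoning: a vanishing top (resp.\ bottom) homology group makes the outermost differential split mono (resp.\ split epi), the relation $d^2=0$ guarantees the module-level splitting is compatible with the remaining differentials, and removing the resulting contractible two-term summand $P \xrightarrow{\ \sim\ } P$ gives a homotopy-equivalent complex of strictly smaller dimension in some degree, contradicting minimality. One small point worth keeping: your observation that self-injectivity is only invoked at the top end (to make the projective $P_b$ injective so the mono $d_b$ splits), while the bottom end uses only projectivity of $P_a$, is accurate and slightly sharper than the statement as written, which phrases both conclusions under the self-injectivity hypothesis. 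The remark that $P'_{b-1}$ and $\ker d_{a+1}$ remain projective (being direct summands of projectives), so that the stripped-down complex is still perfect, is implicit in your write-up and correct.
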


\begin{definition}

By the \textit{length} of a perfect complex $P_\bullet$ of $\Lambda$-modules we will mean the length of the minimal perfect complex $Q_\bullet$ homotopic to $P_\bullet$. Explicitly, if 
$$
Q_\bullet = \cdots \to 0\to Q_m\to\cdots\to Q_n\to 0\to\cdots
$$
then the length of $P_\bullet$ is $m-n+1$.
If we also assume that $P_\bullet$ (and $Q_\bullet$) are indecomposable, this is the number of non-zero terms in $Q_\bullet$, and if $\Lambda$ is self-injective it is the difference between the degrees of the highest and lowest non-zero homology of $P_\bullet$, plus 1.
\end{definition}

To establish our notation for complexes in an Auslander-Reiten quiver component we record Wheeler's result on its structure for perfect complexes over a self-injective algebra. It was also proved later in \cite{HKR}.

\begin{theorem}[Wheeler, \cite{Whe}]
\label{a-infinity-component-theorem}
Let $\Lambda$ be a self-injective algebra with no semisimple summands. Every component of the Auslander-Reiten quiver of $D^b(\Lambda\hbox{-proj})$ has the form $\ZZ A_\infty$. Specifically, for various perfect complexes $C_0,C_1,\ldots$, it is:
\begin{figure}[h]
%\hfil
{
\def\tempbaselines
{\baselineskip16pt\lineskip3pt
   \lineskiplimit3pt}
\def\diagram#1{\null\,\vcenter{\tempbaselines
\mathsurround=0pt
    \ialign{\hfil$##$\hfil&&\quad\hfil$##$\hfil\crcr
      \mathstrut\crcr\noalign{\kern-\baselineskip}
  #1\crcr\mathstrut\crcr\noalign{\kern-\baselineskip}}}\,}

\def\clap#1{\hbox to 0pt{\hss$#1$\hss}}

$$\diagram{&\clap{\nu C_0[-1]}&&&&\clap{C_0}&&&&\clap{\nu^{-1}C_0[1]}&&&&\clap{\nu^{-2}C_0[2]}\cr
&&\searrow&&\nearrow&&\searrow&&\nearrow&&\searrow&&\nearrow&\cr
\cdots&&&\clap{\nu C_1[-1]}&&&&\clap{C_1}&&&&\clap{\nu^{-1}C_1[1]}&&&\cdots\cr
&&\nearrow&&\searrow&&\nearrow&&\searrow&&\nearrow&&\searrow&\cr
&\clap{\nu^2C_2[-2]}&&&&\clap{\nu C_2[-1]}&&&&\clap{C_2}&&&&\clap{\nu^{-1}C_2[1]}\cr
&&\searrow&&\nearrow&&\searrow&&\nearrow&&\searrow&&\nearrow&\cr
&&&\clap{\vdots}&&&&\clap{\vdots}&&&&\clap{\vdots}\cr
}
$$
}
%\hfil
\caption{Auslander-Reiten quiver component of perfect complexes for a self-injective algebra}\label{AR-quiver-self-injective}
\end{figure}
\end{theorem}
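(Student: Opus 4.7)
The plan is to apply Riedtmann's structure theorem for stable translation quivers and then identify the tree class and admissible automorphism group using the specific features of $D^b(\Lambda\hbox{-proj})$. Happel's theorem quoted above guarantees that Auslander-Reiten triangles exist at every indecomposable, so each component is a stable translation quiver with translation $\tau$ induced by $\nu[-1]$. Riedtmann's structure theorem, in the form valid for triangulated categories with Serre functor (developed by Happel and extended by Xiao--Zhu), then gives that each component has the form $\ZZ T/G$ for some valued tree $T$ and some admissible automorphism group $G$.

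The first step is to check that $G=1$. If $\tau^n C\cong C$ for some indecomposable $C$ and some $n\ge 1$, then $\nu^n C\cong C[n]$. Passing to minimal models, $\nu$ acts termwise and sends indecomposable projectives to indecomposable projectives, so the minimal complex of $\nu^n C$ is supported in exactly the same set of degrees as that of $C$, whereas $C[n]$ has that set shifted by $n$. This forces $n=0$ and hence $G=1$, so every component is $\ZZ T$ for an infinite tree $T$.

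The main obstacle is then to identify $T$ as $A_\infty$ rather than $D_\infty$ or $A_\infty^\infty$. Both alternatives are characterised by the existence of a vertex of graph-valency at least three, equivalently an Auslander-Reiten triangle $\tau C\to B\to C$ whose middle term $B$ has three or more indecomposable summands. I would exclude this possibility by producing a positive subadditive function on the component; the length of the minimal perfect complex is a natural candidate, as is $\dim_k\Hom_{D^b(\Lambda)}(P,-)$ for a well chosen indecomposable projective $P$. The Happel--Preiser--Ringel classification of stable translation quivers that admit such a function then leaves $A_\infty$ as the only option. An alternative route would be to unravel Happel's mapping-cone construction of $B$ and verify directly that it has at most two indecomposable summands; here self-injectivity and the absence of semisimple summands of $\Lambda$ would be used to prevent any further summand from appearing.

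Once the shape is established as $\ZZ A_\infty$, the vertex labelling is read off from $\tau=\nu[-1]$: fixing a sequence $C_0,C_1,\ldots$ along a section of the component from the rim inwards, the $\tau$-orbit of $C_j$ is $\{\nu^i C_j[-i]\}_{i\in\ZZ}$, and these orbits assemble into the diagram displayed above.
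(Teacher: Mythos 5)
The paper does not prove this statement: it is recorded as Wheeler's theorem with citations to \cite{Whe} and \cite{HKR}, and no proof is given. So there is nothing in the paper to compare your argument to; I can only assess the proposal on its own.

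Your outline (Riedtmann's structure theorem, freeness of $\tau$, then a subadditive function and Happel--Preiser--Ringel) is the right general strategy, but as written it has concrete gaps. First, $A_\infty^\infty$ is \emph{not} characterized by a vertex of valency at least three: it is the doubly infinite line $\cdots\!-\!\bullet\!-\!\bullet\!-\!\cdots$, in which every vertex has valency exactly two. What singles out $A_\infty$ among the infinite trees is the existence of a valency-one vertex (a ``rim''). Consequently your alternative route --- checking that the middle term of an Auslander--Reiten triangle has at most two indecomposable summands --- cannot distinguish $\ZZ A_\infty$ from $\ZZ A_\infty^\infty$, and would not finish the proof.

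Second, the candidate functions you name are not obviously of the kind HPR requires. Subadditivity means $d(\tau C)+d(C)\ge\sum_i d(B_i)$ where $B=\bigoplus_i B_i$ is the full middle term; for the length of the minimal complex the mapping-cone estimate only gives $\max_i\ell(B_i)\le\ell(C)+1$, not a bound on the sum, so subadditivity is not clear a priori (indeed it is essentially equivalent to what you are trying to prove). And $\dim_k\Hom_{D^b(\Lambda)}(P,-)$ for a fixed indecomposable projective $P$ need not be strictly positive on the whole component. A more natural choice is $d(X)=\sum_i\dim_k H_i(X)$ of the minimal complex: Lemma~\ref{split-long-exact-sequence-lemma} shows $d$ is subadditive, additive at every mesh except those ending at a shift of a projective, and strictly subadditive at a shifted projective. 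Third, even with such a function, merely being positive and subadditive does not pin down $A_\infty$ in the HPR classification; you need it to be \emph{not additive}, or additive and unbounded. For components containing a shift of a projective the $\times$-mesh makes $d$ non-additive, and then freeness of $\tau$ rules out the finite Dynkin trees; but for components with no projective, $d$ is additive everywhere, and you would still need to argue unboundedness (or find some other vertex of valency one) before HPR lets you conclude $A_\infty$. This is the genuinely hard point of Wheeler's theorem and the proposal does not address it.
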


\begin{definition}
The complexes $\nu^{-n}C_0[n]$, $n\in\ZZ$, constitute the \textit{rim} of the quiver component. The complexes $\nu^{-n}C_d[n]$ are said to be at \textit{distance $d$ from the rim}. Given a complex $\nu^{-n}C_d[n]$ its \textit{wing} consists of the complexes $\nu^{-i}C_j[i]$ with $0\le j\le d$ and $-n - d + j\le i \le -n$. These are the complexes lying within a triangle with $\nu^{-n}C_d[n]$ as a vertex and a segment of the rim as the opposite edge. The \textit{wing rim} associated to $\nu^{-n}C_d[n]$ is the part of the rim that lies in the wing. A \textit{mesh} of this quiver diagram is a set of four vertices forming a diamond, whose corresponding complexes form the first three terms of an Auslander-Reiten triangle (in the case of an Auslander-Reiten triangle with first and third terms on the rim, the mesh again consists of the first three terms of that triangle, but there are only three complexes and they are the vertices of a triangular shape.).
\end{definition}

\section{The homology diagram of a quiver component}
\label{quiver-position}

Throughout this section $\Lambda$ will be a finite dimensional self-injective algebra over $k$ with no semisimple direct summand. Thus by Theorem~\ref{a-infinity-component-theorem} all components of the Auslander-Reiten quiver of $D^b(\Lambda\hbox{-proj})$ have the form $\ZZ A_\infty$. We will examine the degree-zero homology of the complexes in each component, showing that it stabilizes sufficiently far from the rim to a single module. We will describe completely the quiver components that contain projective modules (regarded as complexes in degree 0).

We start with some definitions.

\begin{definition}
Let $\Lambda$ be a self-injective algebra. For each $\Lambda$-module $M$ we let $\calP_M$ be the 2-term complex $P_1\to P_0$ in degrees 1 and 0 such that $P_1\to P_0\to M$ is the start of a minimal projective resolution of $M$. For each  simple module $S$ with projective cover $P_S$ note that $\Soc P_S$ is a simple module, and $\nu P_S$ is the injective hull of $S$, which is again an indecomposable projective module. We define $\calH_S$ to be the 3-term complex $\nu^{-1}P_S\to P_S\to\nu P_S$ in degrees 1, 0 and -1, where both maps send the top of one indecomposable projective isomorphically to the simple socle of the next.
We write $H_S=\Rad P_S / \Soc P_S$ for the \textit{heart} of $P_S$.
\end{definition}

\begin{lemma}
Let $\Lambda$ be a self-injective algebra, $M$ a $\Lambda$-module and $S$ a simple $\Lambda$-module.
\begin{enumerate}
\item $H_0(\calP_M)\cong M$ and $H_1(\calP_M)\cong \Omega^2 M$.
\item $\calP_{\Omega^{-1}S}$ is the complex $P_S\to\nu P_S$, where the map sends the simple top of $P_S$ isomorphically to the socle of $\nu P_S$. Similarly $\nu^{-1}\calP_{\Omega^{-1}S}$ is the complex $\nu^{-1}P_S\to P_S$.
\item $H_i(\calH_S) = 
\begin{cases}\Omega^{-1} S& i=-1,\\ 
H_S:=\Rad P_S / \Soc P_S &i=0,\\ 
\Omega \Soc P_S&i=1.\\
\end{cases}
$
\end{enumerate}
\end{lemma}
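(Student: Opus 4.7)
The plan is to read each homology group directly off the given complexes, using that every differential in sight is a top-to-socle map between indecomposable projectives. Kernels and cokernels of such maps are then interpreted via the syzygy and cosyzygy functors $\Omega$ and $\Omega^{-1}$.

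For (1), write $\calP_M=(P_1\xrightarrow{d}P_0)$ as the first two terms of a minimal projective resolution $P_1\to P_0\to M\to 0$. Then $P_0/\Im d\cong M$ is immediate. For $H_1$, the map $d$ factors as $P_1\twoheadrightarrow \Omega M\hookrightarrow P_0$, where the surjection is the projective cover of $\Omega M=\ker(P_0\twoheadrightarrow M)$; hence $\ker d=\Omega(\Omega M)=\Omega^2 M$.

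For (2), the key input is self-injectivity: $\nu P_S$ is the injective hull of $S$ and is itself an indecomposable projective module with $\Soc\nu P_S=S$. The short exact sequence
$$0\to S\to \nu P_S\to \Omega^{-1}S\to 0$$
therefore displays $\nu P_S$ as the projective cover of $\Omega^{-1}S$, with $\Omega(\Omega^{-1}S)=S$. Continuing the minimal resolution, the next projective is the cover $P_S\twoheadrightarrow S$, so the second differential is the composite $P_S\twoheadrightarrow S\hookrightarrow \nu P_S$, which is precisely the top-to-socle map described. The second statement follows by applying the exact auto-equivalence $\nu^{-1}$.

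For (3), both differentials of $\calH_S$ are top-to-socle, and have simple image: the map $P_S\to\nu P_S$ factors as $P_S\twoheadrightarrow S\hookrightarrow\nu P_S$, while $\nu^{-1}P_S\to P_S$ factors as $\nu^{-1}P_S\twoheadrightarrow \Soc P_S\hookrightarrow P_S$. Consequently $H_{-1}(\calH_S)=\nu P_S/S\cong \Omega^{-1}S$ by definition of $\Omega^{-1}S$, and $H_0(\calH_S)=\Rad P_S/\Soc P_S=H_S$ because the kernel of the right differential inside $P_S$ is $\Rad P_S$ while the image of the left differential is $\Soc P_S$. For $H_1$, one must see that the surjection $\nu^{-1}P_S\twoheadrightarrow \Soc P_S$ is actually a projective cover, so that its kernel is $\Omega(\Soc P_S)$. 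This is the only delicate point: under the Nakayama permutation, $\nu$ sends the top of an indecomposable projective $Q$ to the socle of $\nu Q$, so applying this with $Q=\nu^{-1}P_S$ yields $\mathop{\rm top}(\nu^{-1}P_S)=\Soc P_S$. Hence $\nu^{-1}P_S$ is indeed the projective cover of the simple module $\Soc P_S$, and $H_1(\calH_S)=\Omega(\Soc P_S)$.

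The main (small) obstacle is the last identification in (3); everything else is a direct reading of images and kernels of top-to-socle maps. Once the correspondence $\mathop{\rm top}(\nu^{-1}P_S)=\Soc P_S$ is in hand, all three homology computations fall out immediately from the definitions of $\Omega$ and $\Omega^{-1}$.
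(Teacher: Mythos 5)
Your proof is correct and takes the same approach as the paper, which simply states that "these calculations are immediate from the definitions"; you have supplied the routine verifications (identifying each differential as a top-to-socle map, and reading off kernels and cokernels via $\Omega$ and $\Omega^{-1}$), and the one genuinely useful observation you flag — that $\mathop{\rm top}(\nu^{-1}P_S)\cong\Soc P_S$, so $\nu^{-1}P_S$ is the projective cover of $\Soc P_S$ — is exactly what is needed to get $H_1(\calH_S)\cong\Omega\Soc P_S$.
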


\begin{proof}
These calculations are immediate from the definitions. 
\end{proof}

The complexes just described appear in the Auslander-Reiten quiver component that contains $P_S$, as the next result shows. The full structure of this quiver component will be described in Theorem~\ref{projective-homology-picture}.

\begin{proposition}
\label{projectives-on-rim}
Let $S$ be a simple non-projective module for a self-injective algebra $\Lambda$ and let $P_S$  be the projective cover of $S$. The Auslander-Reiten quiver of perfect complexes has the following shape close to $P_S$: 
{
\def\tempbaselines
{\baselineskip16pt\lineskip3pt
   \lineskiplimit3pt}
\def\diagram#1{\null\,\vcenter{\tempbaselines
\mathsurround=0pt
    \ialign{\hfil$##$\hfil&&\quad\hfil$##$\hfil\crcr
      \mathstrut\crcr\noalign{\kern-\baselineskip}
  #1\crcr\mathstrut\crcr\noalign{\kern-\baselineskip}}}\,}
\def\clap#1{\hbox to 0pt{\hss$#1$\hss}}
$$\diagram{&\clap{\nu P_S[-1]}&&&&\clap{P_S}&&&&\clap{\nu^{-1}P_S[1]}\cr
&&\searrow&&\nearrow&&\searrow&&\nearrow&\cr
\cdots&&&\clap{\calP_{\Omega^{-1}S}[-1]}&&&&\clap{\nu^{-1}\calP_{\Omega^{-1}S}}&&&\cdots\cr
&&\nearrow&&\searrow&&\nearrow&&\searrow&\cr
&&&&&\clap{\calH_S}&&&&\cr
%&&\searrow&&\nearrow&&\searrow&&\nearrow&\cr
&&&\clap{\vdots}&&&&\clap{\vdots}\cr
}
$$
}
\end{proposition}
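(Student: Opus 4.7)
The plan is to identify the AR triangle ending at $P_S$ by realising it as the boundary triangle of an explicit short exact sequence of complexes, verify the almost-split property by a lifting argument, and then populate the remaining three vertices of the diagram using the Nakayama-shift symmetry of the $\ZZ A_\infty$ component from Theorem~\ref{a-infinity-component-theorem}.

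First, the short exact sequence
$$0\to \nu P_S[-1]\to \calP_{\Omega^{-1}S}[-1]\to P_S\to 0,$$
in which $\calP_{\Omega^{-1}S}[-1]$ is the two-term complex $P_S\to\nu P_S$ in degrees $0$ and $-1$, yields a distinguished triangle in $D^b(\Lambda\hbox{-proj})$ whose connecting map $\delta\colon P_S\to\nu P_S$ is the defining differential of $\calP_{\Omega^{-1}S}$, namely the composite $P_S\twoheadrightarrow S\hookrightarrow \nu P_S$. Happel's theorem already guarantees the AR triangle at $P_S$ has first term $\nu P_S[-1]$, and $\calP_{\Omega^{-1}S}[-1]$ is indecomposable, so it suffices to verify almost-splitness. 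For this I would show that every $g\colon W\to P_S$ in $D^b$ with $W$ indecomposable and not isomorphic to $P_S$ satisfies $\delta g=0$. Since $P_S$ and $\nu P_S$ are projective-injective, one has the standard identifications $\Hom_{D^b(\Lambda)}(W,\nu P_S)\cong \Hom_\Lambda(H_0(W),\nu P_S)$ and $\Hom_{D^b(\Lambda)}(P_S,W)\cong \Hom_\Lambda(P_S,H_0(W))$; under these, $\delta g$ corresponds to the composite $H_0(W)\to P_S\twoheadrightarrow S\hookrightarrow \nu P_S$, which is nonzero exactly when $H_0(W)\to P_S$ is surjective. In that case a section $P_S\to H_0(W)$ (given by projectivity of $P_S$) lifts to a morphism $P_S\to W$ in $D^b$ splitting $g$, forcing $W\cong P_S$ and giving a contradiction. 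Thus the triangle is AR and $P_S$ lies on the rim with distance-one neighbour $\calP_{\Omega^{-1}S}[-1]$.

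Applying $\nu^{-1}[1]$ to this triangle produces the AR triangle ending at $\nu^{-1}P_S[1]$, whose middle term is $\nu^{-1}\calP_{\Omega^{-1}S}$, and this places both distance-one neighbours of $P_S$. To locate $\calH_S$ at distance two, I would construct the short exact sequence
$$0\to \calP_{\Omega^{-1}S}[-1]\to \calH_S\oplus P_S\to \nu^{-1}\calP_{\Omega^{-1}S}\to 0$$
from the evident chain maps together with a diagonal on the degree-$0$ copy of $P_S$, giving a triangle whose middle term is $\calH_S\oplus P_S$. The $\ZZ A_\infty$ structure forces the AR triangle ending at $\nu^{-1}\calP_{\Omega^{-1}S}$ to have middle term $P_S\oplus C_2$ for a unique indecomposable distance-two complex $C_2$; matching this against the constructed triangle (using indecomposability of $\calH_S$ and the uniqueness of AR triangles) identifies $C_2\cong \calH_S$. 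The main obstacle is the almost-split verification at $P_S$; the distance-two identification then reduces to essentially the same lifting argument applied one step further into the quiver.
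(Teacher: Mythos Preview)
Your verification of the AR triangle at $P_S$ via the lifting property is correct and essentially equivalent to the paper's route, which instead checks directly that the map $P_S\to\nu P_S$ lies in the socle of the Hom-space (Happel's criterion) and then takes the mapping cone. One minor omission: your lifting argument should also cover the case $W\cong P_S$ with $g$ a non-isomorphism, but the same surjectivity criterion disposes of it.

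The genuine gap is at distance two. You build a short exact sequence giving a triangle $\calP_{\Omega^{-1}S}[-1]\to \calH_S\oplus P_S\to \nu^{-1}\calP_{\Omega^{-1}S}\to$, and then invoke ``uniqueness of AR triangles'' to match $C_2$ with $\calH_S$. But uniqueness means only that \emph{the} AR triangle is unique, not that every triangle with the same first and third terms is isomorphic to it; there can be many non-isomorphic triangles sharing those ends, with different middle terms. So you still owe a proof that your constructed triangle \emph{is} the AR triangle. Your closing remark that ``the same lifting argument applied one step further'' handles this is exactly where the work lies, and it is not straightforward: $\nu^{-1}\calP_{\Omega^{-1}S}$ is a two-term complex, so the $H_0$ reduction via projectivity/injectivity that made the $P_S$ case clean is no longer available. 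The paper sidesteps this by working in the opposite order: it writes down an explicit chain map $\nu^{-1}\calP_{\Omega^{-1}S}[-1]\to\calP_{\Omega^{-1}S}[-1]$, checks that it is nonzero up to homotopy and lies in the socle under the endomorphism action, and only then computes its mapping cone, exhibiting the decomposition $P_S\oplus\calH_S$ via an explicit change of basis in the middle degree. To close your gap you would have to compute the connecting map of your short exact sequence and verify the socle condition on it---which is precisely the paper's computation done backwards.
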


\begin{proof}
The construction of the Auslander-Reiten triangles comes from the description in \cite{Hap}. Thus we have an Auslander-Reiten triangle $\nu P_S[-1]\to \calP_{\Omega^{-1}S}[-1]\to P_S\to\nu P_S$ where the final map sends the top of $P_S$ to the socle of $\nu P_S$. This is because this final map of 1-term complexes is not homotopic to zero and lies in the socle of $\Hom(P_S,\nu P_S)$, and $\calP_{\Omega^{-1}S}$ is the mapping cone of this map. The complex $\calP_{\Omega^{-1}S}$ is indecomposable (or zero in case $P_S = S$), because if it were not it would have to be the direct sum of two one-term complexes $P_S$ and $\nu P_S$ and would have projective homology. However, the homology of $\calP_{\Omega^{-1}S}$ is $\Omega(S)$ and $\Omega^{-1}(S)$ in degrees 1 and 0, and is not projective.

We construct the middle term of the Auslander-Reiten triangle starting at $\calP_{\Omega^{-1}S}[-1]$ as the mapping cone of a map $\nu^{-1}\calP_{\Omega^{-1}S}[-1]\to \calP_{\Omega^{-1}S}[-1]$ which is not homotopic to zero and which lies in the socle of such maps under the action of the endomorphism ring of either complex. Such a map is
$$
\begin{pmatrix}
P_S\cr
\llap{$\scriptstyle\alpha$}\uparrow
\vphantom{\lmapup{\alpha}}\cr 
\nu^{-1}P_S\cr
\end{pmatrix}
\begin{matrix}
\umapright{\nu(\alpha)}\cr
\phantom{\lmapup{\alpha}}\cr
\umapright{0}\cr
\end{matrix}
\begin{pmatrix}
\nu P_S\cr
\llap{$\scriptstyle\nu(\alpha)$}\uparrow
\vphantom{\lmapup{\alpha}}\cr 
\hskip13pt P_S\hskip13pt\cr
\end{pmatrix},
$$
as is readily verified,
and its mapping cone is a complex
$$
\nu^{-1}P_S\xrightarrow{\begin{pmatrix}\scriptstyle\alpha\cr \scriptstyle0\cr\end{pmatrix}}
P_S\oplus P_S \xrightarrow{(\nu(\alpha),\nu(\alpha))}
\nu P_S.
$$
The middle term has a direct summand $\{(x,-x)\bigm| x\in P_S\}\cong P_S$ which is a direct complement to the subcomplex whose middle term is the first $P_S$ direct summand. From this we see that the mapping cone is the direct sum as complexes $P_S\oplus\calH_S$. This completes the description of this part of the Auslander-Reiten quiver.
\end{proof}
\
\begin{corollary}
Let $\Lambda$ be a self-injective algebra. Each component of the Auslander-Reiten quiver of $D^b(\Lambda\hbox{-proj})$ contains at most one indecomposable projective module regarded as a complex in degree 0.
\end{corollary}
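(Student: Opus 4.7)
The plan is to combine Proposition~\ref{projectives-on-rim}, which places any indecomposable projective on the rim of its component, with Wheeler's explicit labeling of the rim in Theorem~\ref{a-infinity-component-theorem}. Suppose $P_S$ and $P_T$ are indecomposable projective $\Lambda$-modules, viewed as complexes concentrated in degree $0$, that lie in the same Auslander-Reiten quiver component.

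The first step is to observe that each of $P_S$ and $P_T$ is a rim vertex of that component. This is a direct readout from Proposition~\ref{projectives-on-rim}: the picture displayed there places the projective $P_S$ (and, equally, $P_T$, applying the proposition to the simple summand of the top of $P_T$) at a rim position, with precisely two neighbors $\calP_{\Omega^{-1}S}[-1]$ and $\nu^{-1}\calP_{\Omega^{-1}S}$ in the quiver. Since both projectives lie in the common component, both sit on its single rim.

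Next, I would choose Wheeler's labeling in Theorem~\ref{a-infinity-component-theorem} so that $C_0 = P_S$. The rim of the component is then the set $\{\nu^{-n}P_S[n]\mid n\in\ZZ\}$, and consequently there must exist an integer $n$ with $P_T\cong\nu^{-n}P_S[n]$ in $D^b(\Lambda\hbox{-proj})$.

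Finally I would compare degrees. Both sides are minimal one-term complexes: $P_T$ has its nonzero term in degree $0$, while $\nu^{-n}P_S[n]$ has its nonzero term in the degree produced by the shift $[n]$, which is nonzero precisely when $n\neq 0$. Two such minimal one-term complexes can be isomorphic in $D^b(\Lambda\hbox{-proj})$ only if they are concentrated in the same degree, so $n=0$ and $P_T\cong\nu^0P_S=P_S$. I do not anticipate a real obstacle here: the substantive work was done in Proposition~\ref{projectives-on-rim}, and the corollary reduces to bookkeeping against the explicit rim labels provided by Wheeler's theorem.
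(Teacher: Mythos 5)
Your argument for the non-semisimple case matches the paper's: place any projective on the rim via Proposition~\ref{projectives-on-rim}, read off Wheeler's labeling of the rim as $\{\nu^{-n}C_0[n]\}$ from Theorem~\ref{a-infinity-component-theorem}, and compare degrees to conclude $n=0$. The degree comparison at the end is correct and in fact spelled out a bit more carefully than in the paper.

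However, there is a small but genuine gap. The corollary is stated for an arbitrary self-injective algebra $\Lambda$, whereas both tools you invoke carry extra hypotheses: Wheeler's Theorem~\ref{a-infinity-component-theorem} assumes $\Lambda$ has no semisimple direct summand, and Proposition~\ref{projectives-on-rim} assumes the simple module $S$ (equivalently, its top $T$ for $P_T$) is non-projective. If the quiver component in question consists of complexes over a semisimple block of $\Lambda$, then the relevant $P_T$ is itself simple and projective, neither cited result applies, and the component is not of shape $\ZZ A_\infty$ at all. The paper's proof disposes of this case separately at the outset: such a component is simply the set of shifts $T[n]$ of a single simple module $T$, and only $T[0]$ sits in degree $0$, so the claim holds trivially there. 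Your proof needs to either add that case explicitly or begin by reducing to the case where $\Lambda$ has no semisimple summands. Once that reduction is stated, the rest of your argument goes through and coincides with the paper's.
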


\begin{proof}
If the component consists of complexes for a semisimple summand of $\Lambda$ it is the set of shifts of a single module, and so there is only one complex that is a module in degree 0 in the component. Otherwise,  Wheeler \cite{Whe} proved that the component has shape $\ZZ A_\infty$. If it contains a projective module, that module lies on the rim by Proposition~\ref{projectives-on-rim}, and the other terms on the rim are its shifts with the Nakayama functor applied, so no other projectives are possible.
\end{proof}

We will obtain a lot of information from the homology of the complexes in a quiver component.

\begin{definition}
The \textit{homology diagram} of an Auslander-Reiten quiver component is the diagram obtained by replacing each complex in the quiver component by its zero homology module, and each irreducible morphism by the $\Lambda$-module homomorphism it induces on zero homology. We illustrate a homology diagram in Figure~\ref{homology-diagram}.
\end{definition}

\begin{proposition}
We continue the notation of Figure~\ref{AR-quiver-self-injective}.
Let $\Lambda$ be a self-injective algebra.
\begin{enumerate}
\item Identifying terms via the isomorphism
$$
H_0(\nu^jC_i[-j])\cong \nu^jH_j(C_i),
$$
 the homology diagram of the quiver component in Figure~\ref{AR-quiver-self-injective} is as follows:
\begin{figure}[h]
%\hfil
{
\def\tempbaselines
{\baselineskip16pt\lineskip3pt
   \lineskiplimit3pt}
\def\diagram#1{\null\,\vcenter{\tempbaselines
\mathsurround=0pt
    \ialign{\hfil$##$\hfil&&\quad\hfil$##$\hfil\crcr
      \mathstrut\crcr\noalign{\kern-\baselineskip}
  #1\crcr\mathstrut\crcr\noalign{\kern-\baselineskip}}}\,}

\def\clap#1{\hbox to 0pt{\hss$#1$\hss}}

$$\diagram{&\clap{\nu H_1(C_0)}&&&&\clap{H_0(C_0)}&&&&\clap{\nu^{-1}H_{-1}(C_0)}&&&&\clap{\nu^{-2}H_{-2}(C_0)}\cr
&&\searrow&&\nearrow&&\searrow&&\nearrow&&\searrow&&\nearrow&\cr
\cdots&&&\clap{\nu H_1(C_1)}&&&&\clap{H_0(C_1)}&&&&\clap{\nu^{-1}H_{-1}(C_1)}&&&\cdots\cr
&&\nearrow&&\searrow&&\nearrow&&\searrow&&\nearrow&&\searrow&\cr
&\clap{\nu^2H_2(C_2)}&&&&\clap{\nu H_1(C_2)}&&&&\clap{H_0(C_2)}&&&&\clap{\nu^{-1}H_{-1}(C_2)}\cr
&&\searrow&&\nearrow&&\searrow&&\nearrow&&\searrow&&\nearrow&\cr
&&&\clap{\vdots}&&&&\clap{\vdots}&&&&\clap{\vdots}\cr
}
$$
}
%\hfil
\caption{Homology diagram of an Auslander-Reiten quiver component of perfect complexes for a self-injective algebra}\label{homology-diagram}
\end{figure}

\item Every homology module of every complex $C_0, C_1,\ldots$ appears in the homology diagram, up to a twist by the Nakayama functor.
\end{enumerate}
\end{proposition}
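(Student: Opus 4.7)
The plan is to reduce both parts to a single identity: for any perfect complex $C$ and integer $j$,
\[
H_0(\nu^j C[-j]) \;\cong\; \nu^j H_j(C).
\]
Once this is in hand, part~(1) is direct substitution into the labels of Figure~\ref{AR-quiver-self-injective}, and part~(2) is a traversal of one row of the resulting diagram.

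To prove the identity I would argue in two independent pieces. The shift part $H_0(X[-j])=H_j(X)$ is formal, either by unwinding degrees or by using the representability $H_i(X)=\Hom_{D^b(\Lambda)}(\Lambda[i],X)$ recalled in Lemma~\ref{split-long-exact-sequence-lemma}. For the Nakayama part, recall that $\nu=D\Hom_\Lambda(-,\Lambda)$ with $D=\Hom_k(-,k)$; both $\Hom_\Lambda(-,\Lambda)$ and $D$ are exact on finitely generated projective $\Lambda$-modules (the former because those modules are projective, the latter because $k$ is a field). Hence $\nu$ applied term-by-term to a perfect complex $Y$ commutes with taking kernels and images of the differentials, so $H_0(\nu Y)\cong\nu H_0(Y)$; iterating in $j$ yields the identity.

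For part~(1) I would substitute $j=-n$ into the identity to obtain $H_0(\nu^{-n}C_i[n])\cong\nu^{-n}H_{-n}(C_i)$, and reading this label off at each vertex of Figure~\ref{AR-quiver-self-injective} reproduces Figure~\ref{homology-diagram} exactly. The arrows need no separate argument: by definition the homology diagram carries the maps on $H_0$ induced by the irreducible morphisms already present in the quiver component. For part~(2), I would fix $i$ and let $n$ range over $\ZZ$ in the $i$-th row of Figure~\ref{homology-diagram}; its labels $\nu^{-n}H_{-n}(C_i)=\nu^{j}H_{j}(C_i)$ exhaust all homology degrees of $C_i$, so every homology module of every $C_i$ appears at some vertex up to a Nakayama twist. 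The only substantive input in the whole argument is the exactness of $\nu$ on projective modules, which is standard for self-injective algebras; everything else is bookkeeping with the shifts indexing the $\ZZ A_\infty$ component.
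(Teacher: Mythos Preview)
Your approach matches the paper's exactly: establish the identity $H_0(\nu^j C[-j])\cong\nu^j H_j(C)$ and then read off both parts. The only issue is in how you justify that $\nu$ commutes with homology.

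You argue that $\Hom_\Lambda(-,\Lambda)$ and $D$ are ``exact on finitely generated projective $\Lambda$-modules''. But exactness on projectives is a vacuous condition: every short exact sequence of finitely generated projectives splits, so any additive functor preserves it. This does not let you conclude $H_0(\nu Y)\cong\nu H_0(Y)$, because the kernels, images, and homology modules arising from a perfect complex $Y$ are typically \emph{not} projective. To pass $\nu$ through those constructions you need $\nu$ to be exact on all finitely generated $\Lambda$-modules. The paper supplies precisely this: since $\Lambda$ is self-injective, $\Lambda$ is injective as a module over itself, so $\Hom_\Lambda(-,\Lambda)$ is exact on $\Lambda\hbox{-mod}$; composed with the exact duality $D=\Hom_k(-,k)$ this makes $\nu$ exact on all of $\Lambda\hbox{-mod}$, and hence it commutes with homology of any complex. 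Your final sentence (``exactness of $\nu$ on projective modules, which is standard for self-injective algebras'') shows you know self-injectivity is the input, but you have misidentified the statement it buys. Once you replace ``on projectives'' with ``on all finitely generated modules, because $\Lambda$ is injective'', your argument is complete and identical to the paper's.
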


\begin{proof}
The Nakayama functor is exact, because it is the composite of the exact functors $\Hom_\Lambda(-,\Lambda)$ (which is exact because $\Lambda$ is injective), and the ordinary duality $\Hom_k(-,k)$. We combine this with the fact that $H_0$ of a shifted complex is homology in a different degree and obtain
$
H_0(\nu^jC_i[-j])\cong \nu^jH_0(C_i[-j]) \cong \nu^jH_j(C_i)
$. The rest is apparent.
\end{proof}

In the next results we will examine the pattern of modules in the homology diagram. The situation is slightly different for quiver components containing a projective module compared to those that do not but the basic idea is the same in both cases. It is a little more complicated in the case of components that contain a projective, so we start with quiver components that do not contain a projective module.

\begin{proposition}\label{homology-pattern}
Let $\Lambda$ be a self-injective algebra and consider the homology diagram of a  quiver component of $D^b(\Lambda\hbox{-proj})$ that does not contain a projective module. 
\begin{enumerate}
\item Every mesh in the diagram consists of modules that lie in a short exact sequence of $\Lambda$-modules. 
\item The composition factors (taken with multiplicity) of $H_0$ of any complex are the union of the composition factors of the $H_0$ of complexes lying on the associated wing rim. 
\end{enumerate}
\end{proposition}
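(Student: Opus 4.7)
The plan is to deduce both parts from Lemma~\ref{split-long-exact-sequence-lemma}. First I would verify that the assumption ``no projective module in the component'' automatically rules out every shifted projective: by Proposition~\ref{projectives-on-rim} an indecomposable projective $P$ sits on the rim of its component, and since the shift $[m]$ is an autoequivalence of $D^b(\Lambda\text{-proj})$, the shifted projective $P[m]$ likewise sits on the rim of its own component; but the rim is a single $\tau^{-1}$-orbit, so that same rim also carries $\nu^m P$ in degree zero. Thus any shifted projective in the component would force a degree-zero projective into it, contradicting the hypothesis. Consequently Lemma~\ref{split-long-exact-sequence-lemma} applies to every Auslander--Reiten triangle $\tau Z\to M\to Z$ in the component, producing the short exact sequence $0\to H_0(\tau Z)\to H_0(M)\to H_0(Z)\to 0$. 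That is part (1).

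For part (2) I would induct on the distance $d$ of $Z$ from the rim. The base case $d=0$ is immediate since the wing rim of a rim complex is the singleton $\{Z\}$. The key trick in the inductive step is not to apply the short exact sequence at the triangle ending at $Z$ itself---whose middle term would include a complex at distance $d+1$, outside the inductive reach---but rather at the triangle ending at a distance-$(d-1)$ neighbor $X$ of $Z$, chosen so that $Z$ appears as a summand of its middle term. This triangle is the triangular mesh $\tau X\to Z\to X$ when $d=1$, and the diamond $\tau X\to Y_1\oplus Z\to X$ with $Y_1$ at distance $d-2$ when $d\ge 2$. In either case part (1) rearranges in the Grothendieck group to
\[
[H_0(Z)] \;=\; [H_0(X)] + [H_0(\tau X)] - [H_0(Y_1)]
\]
(with the last term absent when $d=1$), and every complex on the right lies at distance at most $d-1$ from the rim, so the inductive hypothesis identifies each $[H_0]$ there with the sum of $[H_0]$ over its wing rim.

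All that remains is the combinatorial identity $W(Z)=W(X)+W(\tau X)-W(Y_1)$ on the wing-rim multisets, where $W(\cdot)$ records the indexing positions of the wing rim on the rim of the component. Under the interval description given in the preceding definition, each wing rim is a consecutive run of rim positions of lengths $d+1$, $d$, $d$, $d-1$ respectively, and the identity follows by a short inclusion--exclusion on the four intervals. I expect the principal obstacle to be this inductive reorganization: a naive induction applied at the triangle at $Z$ fails because of the distance-$d+1$ summand, so the shift of perspective to a triangle at a distance-$(d-1)$ neighbor is essential. Once that reorganization and the interval arithmetic are in hand, both parts of the proposition follow mechanically from the short exact sequences supplied by part (1).
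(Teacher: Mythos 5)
Your proof is correct and follows essentially the same route as the paper: part (1) by applying Lemma~\ref{split-long-exact-sequence-lemma} to each mesh, and part (2) by inducting on distance from the rim, rearranging the mesh short exact sequence in the Grothendieck group so that every remaining term lies at strictly smaller distance, then finishing with the interval/inclusion--exclusion identity for wing rims. Your opening observation that ``no projective in the component'' forces ``no shifted projective in the component'' (via $\tau^{-1}$-orbits on the rim and the fact that $[m]$ is an autoequivalence) is a worthwhile clarification of a point the paper leaves implicit when invoking Lemma~\ref{split-long-exact-sequence-lemma}, since that lemma needs $N$ not to be a shift of a projective.
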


In part (1), what we mean is that, for example, the modules in the mesh
{
\def\tempbaselines
{\baselineskip16pt\lineskip3pt
   \lineskiplimit3pt}
\def\diagram#1{\null\,\vcenter{\tempbaselines
\mathsurround=0pt
    \ialign{\hfil$##$\hfil&&\quad\hfil$##$\hfil\crcr
      \mathstrut\crcr\noalign{\kern-\baselineskip}
  #1\crcr\mathstrut\crcr\noalign{\kern-\baselineskip}}}\,}

\def\clap#1{\hbox to 0pt{\hss$#1$\hss}}

$$\diagram{&&\clap{\nu^{i-1}H_{i-1}(C_{n-2})}&&\cr
&\nearrow&&\searrow&\cr
\clap{\nu^i H_i(C_{n-1})}&&&&{\nu^{i-1}H_{i-1}(C_{n-1})}\cr
&\searrow&&\nearrow&\cr
&&\clap{\nu^i H_i(C_n)}&&\cr
}
$$
}
form a short exact sequence of $\Lambda$-modules
$$
0\to \nu^i H_i(C_{n-1})\to \nu^{i-1}H_{i-1}(C_{n-2}) \oplus\nu^i H_i(C_n)\to \nu^{i-1}H_{i-1}(C_{n-1})\to 0
$$
and similarly with triangular meshes, in which case there is only a single summand in the middle.

\begin{proof}
Because there is no projective module in the quiver component, part (1) is an immediate application of  Lemma~\ref{split-long-exact-sequence-lemma}. 

(2) We consider a module $\nu^i H_i(C_n)$ at distance $n$ from the rim and proceed by induction on $n$. On the rim when $n=0$ the result holds.  In case $n=1$ the exact sequence of homology modules of part (1) implies that the composition factors of the middle term are the union (with multiplicities) of the composition factors of the left and right terms, which is what we have to prove. When $n\ge 2$ the composition factors of $\nu^i H_i(C_n)$ are those of the left and right terms in the mesh above it, with the composition factors of the top term in the mesh removed. By induction these are the composition factors from two parts of the rim (factors common to both parts taken twice) with a copy of the factors from the intersection of those parts of the rim removed. This gives exactly the desired result.
\end{proof}

\begin{theorem}\label{homology-picture}
Let $\Lambda$ be a self-injective algebra and consider the homology diagram of a quiver component of $D^b(\Lambda\hbox{-proj})$ that does not contain a projective module. 
\begin{enumerate}
\item At any two positions in the quiver where the wing rims of the homology diagram have the same non-zero terms, the zero homology modules at those positions are isomorphic. Morphisms in the homology diagram between adjacent such positions are isomorphisms.
\item At any horizontal coordinate, sufficiently far from the rim the homology modules stabilize to a module $\Sigma$ whose composition factors are the union of the composition factors of all terms on the rim.
\item Adjacent to zeros on the rim of the homology diagram, this diagram has the shape shown below, with the maps between identically labelled terms being isomorphisms.  
{
\def\tempbaselines
{\baselineskip11pt\lineskip2pt
   \lineskiplimit2pt}
\def\diagram#1{\null\,\vcenter{\tempbaselines
\mathsurround=0pt
    \ialign{\hfil$##$\hfil&&\hskip5pt\hfil$##$\hfil\crcr
      \mathstrut\crcr\noalign{\kern-\baselineskip}
  #1\crcr\mathstrut\crcr\noalign{\kern-\baselineskip}}}\,}
\def\clap#1{\hbox to 0pt{\hss$#1$\hss}}
$$\diagram{&\clap{0}&&&&\clap{0}&&&&\clap{A_0}&&&&\clap{\bullet}
&&&&\clap{\cdots}&&&&\clap{\bullet}&&&&\clap{B_0}&&&&\clap{0}\cr
&&\searrow&&\nearrow&&\searrow&&\nearrow&&\searrow&&\nearrow&
&\searrow&&\nearrow&&\searrow&&\nearrow&&\searrow&&\nearrow&
&\searrow&&\nearrow&\cr
\cdots&&&\clap{0}&&&&\clap{A_0}&&&&\clap{A_1}&&&&\clap{\cdots}
&&&&\clap{\cdots}&&&&\clap{B_1}&&&&\clap{B_0}\cr
&&\nearrow&&\searrow&&\nearrow&&\searrow&&\nearrow&&\searrow&
&\nearrow&&\searrow&&\nearrow&&\searrow&&\nearrow&&\searrow&
&\nearrow&&\searrow&\cr
&\clap{0}&&&&\clap{A_0}&&&&\clap{A_1}&&&&\clap{\cdots}
&&&&\clap{\bullet}&&&&\clap{\cdots}&&&&\clap{B_1}&&&&\clap{B_0}\cr
&&\searrow&&\nearrow&&\searrow&&\nearrow&&\searrow&&\nearrow&
&\searrow&&\nearrow&&\searrow&&\nearrow&&\searrow&&\nearrow&
&\searrow&&\nearrow&\cr
\cdots&&&\clap{A_0}&&&&\clap{A_1}&&&&\clap{\cdots}&&&&\clap{A_{n-1}}&&&&\clap{B_{n-1}}&&&&\clap{\cdots}
&&&&\clap{B_1}\cr
&&\nearrow&&\searrow&&\nearrow&&\searrow&&\nearrow&&\searrow&
&\nearrow&&\searrow&&\nearrow&&\searrow&&\nearrow&&\searrow&
&\nearrow&&\searrow&\cr
&\clap{A_0}&&&&\clap{A_1}&&&&\clap{\cdots}&&&&\clap{A_{n-1}}&&&&\clap{\Sigma}&&&&\clap{B_{n-1}}
&&&&\clap{\cdots}&&&&\clap{B_1}\cr
&&\searrow&&\nearrow&&\searrow&&\nearrow&&\searrow&&\nearrow&
&\searrow&&\nearrow&&\searrow&&\nearrow&&\searrow&&\nearrow&
&\searrow&&\nearrow&\cr
\cdots&&&\clap{A_1}&&&&\clap{\cdots}&&&&\clap{A_{n-1}}&&&&\clap{\Sigma}&&&&\clap{\Sigma}&&&&\clap{B_{n-1}}&&&&\clap{\cdots}
\cr
&&\nearrow&&\searrow&&\nearrow&&\searrow&&\nearrow&&\searrow&
&\nearrow&&\searrow&&\nearrow&&\searrow&&\nearrow&&\searrow&
&\nearrow&&\searrow&\cr
&\clap{A_1}&&&&\clap{\cdots}&&&&\clap{A_{n-1}}&&&&\clap{\Sigma}&&&&\clap{\Sigma}&&&&\clap{\Sigma}&&&&\clap{B_{n-1}}
&&&&\clap{\cdots}\cr
&&&\clap{\vdots}&&&&\clap{\vdots}&&&&\clap{\vdots}&&&&\clap{\vdots}&&&&\clap{\vdots}&&&&\clap{\vdots}&&&&\clap{\vdots}
\cr
}
$$
}
\end{enumerate}
\end{theorem}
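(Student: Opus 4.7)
The plan is to use the mesh short exact sequences from Proposition~\ref{homology-pattern}(1) together with the composition-factor identity of Proposition~\ref{homology-pattern}(2), propagating information inductively outward from the rim of the quiver component. The guiding principle is that adding a zero entry to a wing rim should not change the module at the corresponding vertex.

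For Part~(1) I first focus on adjacent vertices $v, v'$ sharing a mesh. Fix a mesh with corners $T, L, R, B$ (where $L, R$ lie at distance $d$, $T$ at distance $d-1$, and $B$ at distance $d+1$) and its short exact sequence
$$
0 \to H_0(L) \to H_0(T) \oplus H_0(B) \to H_0(R) \to 0.
$$
The wing rims of $L$, $R$, $B$ each extend the wing rim of $T$ by one or two further rim positions in a specific pattern ($L$ and $R$ extending on opposite sides, $B$ extending on both). Hence the hypothesis that two adjacent corners share the same multiset of non-zero wing rim entries forces a specific rim position to have zero homology module, and I carry out a case analysis over the four adjacent pairs $(T,L), (T,R), (L,B), (R,B)$. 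The technical crux is to promote equality of composition factors (giving, e.g., $\dim H_0(L) = \dim H_0(T)$ by Proposition~\ref{homology-pattern}(2)) to an actual isomorphism of modules. My strategy is induction on $d$: in the base case $d = 1$ the mesh adjoins a triangular rim mesh whose short exact sequence $0 \to H_0(\text{left rim}) \to H_0(\text{distance-}1 \text{ vertex}) \to H_0(\text{right rim}) \to 0$ directly yields the isomorphism when one rim endpoint vanishes; for $d > 1$ an overlapping mesh one step closer to the rim supplies a short exact sequence in which the desired component map appears, and one matches it to an isomorphism already furnished by the inductive hypothesis.

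Part~(1) for non-adjacent pairs then follows by concatenating isomorphisms along a path in the quiver component chosen so that the non-zero wing rim is constant at every intermediate vertex. The $\ZZ A_\infty$ geometry always admits such a path, because each elementary step between adjacent vertices changes the wing rim by exactly one position, so preserving the non-zero wing rim amounts to adding or suppressing a zero rim entry at each step.

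Part~(2) is then immediate: at any fixed horizontal coordinate the wing rim grows with distance and eventually absorbs every non-zero rim module (only finitely many, since any rim complex is perfect with bounded homology); beyond that distance the wing rim gains only zeros, and Part~(1) delivers a common stabilization $\Sigma$, whose composition factors are the union of the non-zero rim composition factors by Proposition~\ref{homology-pattern}(2). Part~(3) is the concrete instantiation, verified mesh by mesh outward from the rim: the triangular mesh at the boundary between the zero and non-zero rim regions fixes $A_0$ at the adjacent distance-$1$ vertex, and each subsequent mesh either preserves the current label via the isomorphism of Part~(1) or increments it to $A_{i+1}$, realized as an extension of the next rim entry by $A_i$. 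A symmetric analysis from the right produces the $B_j$, and where the two patterns meet Part~(2) yields $\Sigma$ in the interior. The principal obstacle throughout is the isomorphism claim of Part~(1); everything else reduces to bookkeeping with short exact sequences and composition factors.
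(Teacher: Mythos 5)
Your proposal follows essentially the same route as the paper: you force a zero onto the rim from the hypothesis of equal non-zero wing rims, use the triangular rim mesh to get the base-case isomorphism, and then propagate isomorphisms inward through the mesh short exact sequences (the paper calls this a "ladder" rather than an induction, but it is the same mechanism — once one component of the monomorphism in a mesh sequence is an isomorphism, the sequence splits and the opposite component of the epimorphism is also an isomorphism). The one place you are vaguer than you should be is precisely that splitting step: "one matches it to an isomorphism already furnished by the inductive hypothesis" glosses over the fact that the desired map ($g\to f$ in the paper's notation) is a \emph{different} map from the one the inductive hypothesis provides ($e\to d$), and the link between them is that injectivity of $g\to f$ follows from the kernel computation in the mesh sequence $0\to e\to d\oplus g\to f\to 0$ while surjectivity follows from the resulting length count; stating this explicitly would close the only real gap. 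Parts (2) and (3) are handled as in the paper.
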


\begin{proof}
(1) At two positions where the wing rims have the same non-zero terms, the modules have the same composition factors, by Proposition~\ref{homology-pattern}(2). Consider two adjacent such positions, such as the positions labelled $f,g$ in the diagram
{
\def\tempbaselines
{\baselineskip11pt\lineskip2pt
   \lineskiplimit2pt}
\def\diagram#1{\null\,\vcenter{\tempbaselines
\mathsurround=0pt
    \ialign{\hfil$##$\hfil&&\hskip5pt\hfil$##$\hfil\crcr
      \mathstrut\crcr\noalign{\kern-\baselineskip}
  #1\crcr\mathstrut\crcr\noalign{\kern-\baselineskip}}}\,}
\def\clap#1{\hbox to 0pt{\hss$#1$\hss}}
\begin{equation}\label{left-zeros}
\diagram{\hbox{rim:}\quad&\clap{a}&&&&\clap{b}&&&&\clap{\cdots}\cr
&&\searrow&&\nearrow&&\searrow&&\cr
&&&\clap{c}&&&&\clap{d}\cr
&&&&\searrow&&\nearrow&&\searrow\cr
&&&&&\clap{e}&&&&\clap{f}\cr
&&&&&&\searrow&&\nearrow\cr
&&&&&&&\clap{g}\cr
}
\end{equation}
}
In this example $g$ is southwest of $f$. There is another situation where $g$ is northwest of $f$, and it is handled similarly. Because $g$ and $f$ have the same non-zero terms on the wing rim, $a$ must be zero. Now the short exact sequence $0\to a\to c\to b\to 0$ shows that the map $c\to b$ is an isomorphism. In the short exact sequence $0\to c\to b\oplus e\to d\to 0$ the component $c\to b$ is an isomorphism, so the sequence splits and $e\to d$ is an isomorphism. Continuing, we deduce that $g\to f$ is an isomorphism, and we obtain a ladder of such isomorphisms. We have shown that the diagram has the form
{
\def\tempbaselines
{\baselineskip11pt\lineskip2pt
   \lineskiplimit2pt}
\def\diagram#1{\null\,\vcenter{\tempbaselines
\mathsurround=0pt
    \ialign{\hfil$##$\hfil&&\hskip5pt\hfil$##$\hfil\crcr
      \mathstrut\crcr\noalign{\kern-\baselineskip}
  #1\crcr\mathstrut\crcr\noalign{\kern-\baselineskip}}}\,}
\def\clap#1{\hbox to 0pt{\hss$#1$\hss}}
\begin{equation}\label{left-zeros}
\diagram{\hbox{rim:}\quad&\clap{0}&&&&\clap{A_0}&&&&\clap{\cdots}\cr
&&\searrow&&\nearrow&&\searrow&&\cr
&&&\clap{A_0}&&&&\clap{A_1}\cr
&&&&\searrow&&\nearrow&&\searrow\cr
&&&&&\clap{A_1}&&&&\clap{A_2}\cr
&&&&&&\searrow&&\nearrow\cr
&&&&&&&\clap{A_2}\cr
}
\end{equation}
}
for certain modules $A_0,A_1,A_2$ and where each of the mappings $A_i\to A_i$  is an isomorphism. 
This proves the second statement in (1), and the first statement holds because, between any two positions where the wing rims have the same non-zero modules, there is a path through adjacent such positions.

 %Proceeding in this way we establish the pattern shown on the left side of diagram~\ref{left-zeros}, and the pattern on the right adjacent to zeros on the rim of the homology diagram is established similarly.

(2) The entries on the rim of the homology diagram are the various homology modules of a single perfect complex, twisted by the Nakayama functor, so they are eventually zero to the left and to the right. At any given horizontal coordinate, sufficiently far from the rim the wing rim includes all non-zero homology modules, and so the homology modules at that distance are all isomorphic, by part (1). At this point all morphisms in the diagram are isomorphisms, and the homology has stabilized to a module $\Sigma$ whose composition factors are the union of the composition factors of all terms on the rim, by Proposition~\ref{homology-pattern}(2). 

(3) This follows from (1) and (2).
\end{proof}

\begin{definition}
For a perfect complex $P_\bullet$ over a self-injective algebra $\Lambda$, we will say that the $\Lambda$-module $\Sigma$ is the \textit{stabilization module} of $P_\bullet$ if, at each horizontal coordinate of the homology diagram of the Auslander-Reiten quiver component of $P_\bullet$, the modules stabilize with value $\Sigma$ sufficiently far from the rim.
\end{definition}

We now describe the homology diagram of a quiver component containing an indecomposable projective module $P_S$ and then use this to describe completely the entire quiver component. Recall that $H_S=\Rad P_S / \Soc P_S$ is the heart of $P_S$. We will see that $H_S$ is the stabilization module of $P_S$.

\begin{theorem}
\label{projective-homology-picture}
Let $S$ be a simple module for a self-injective algebra $\Lambda$ and let $P_S$  be the projective cover of $S$.  We assume $S\ne P_S$.
\begin{enumerate}
\item
The homology diagram of the quiver component containing $P_S$ is as follows:
{
\def\tempbaselines
{\baselineskip14pt\lineskip2pt
   \lineskiplimit2pt}
\def\diagram#1{\null\,\vcenter{\tempbaselines
\mathsurround=0pt
    \ialign{\hfil$##$\hfil&&\hskip10pt\hfil$##$\hfil\crcr
      \mathstrut\crcr\noalign{\kern-\baselineskip}
  #1\crcr\mathstrut\crcr\noalign{\kern-\baselineskip}}}\,}
\def\clap#1{\hbox to 0pt{\hss$#1$\hss}}
$$\diagram{&\clap{0}&&&&\clap{0}&&\clap{\times}&&\clap{P_S}&&\clap{\times}&&\clap{0}
&&&&\clap{0}&&\cr
&&\searrow&&\nearrow&&\searrow&&\nearrow&&\searrow&&\nearrow&
&\searrow&&\nearrow&\cr
\cdots&&&\clap{0}&&&&\clap{\Rad P_S}&&&&\clap{P_S/\Soc P_S}&&&&\clap{0}
&&&\cdots\cr
&&\nearrow&&\searrow&&\nearrow&&\searrow&&\nearrow&&\searrow&
&\nearrow&&\searrow&\cr
&\clap{0}&&&&\clap{\Rad P_S}&&&&\clap{H_S}&&&&\clap{P_S/\Soc P_S}
&&&&\clap{0}&&\cr
&&\searrow&&\nearrow&&\searrow&&\nearrow&&\searrow&&\nearrow&
&\searrow&&\nearrow&\cr
\cdots&&&\clap{\Rad P_S}&&&&\clap{H_S}&&&&\clap{H_S}&&&&\clap{P_S/\Soc P_S}&&&\cdots\cr
&&\nearrow&&\searrow&&\nearrow&&\searrow&&\nearrow&&\searrow&
&\nearrow&&\searrow&\cr
&\clap{\Rad P_S}&&&&\clap{H_S}&&&&\clap{H_S}&&&&\clap{H_S}&&&&\clap{P_S/\Soc P_S}\cr
&&&\clap{\vdots}&&&&\clap{\vdots}&&&&\clap{\vdots}&&&&\clap{\vdots}&&\cr
}
$$
}
Every mesh in the diagram labels the terms in a short exact sequence of zero homology modules, except for the two meshes labelled $\times$. In every case the short exact sequence is split except for the one underneath $P_S$, which is an Auslander-Reiten sequence.
\item The stabilization module of this quiver component is $H_S$.
\item
With the notation of Figure~\ref{AR-quiver-self-injective} the complex $C_n$ at distance $n$ from the rim of the quiver component containing $P_S$ is
$$
\nu^{-n} P_S\to\cdots\to\nu^{-2} P_S\to\nu^{-1} P_S\to P_S
$$
where each map sends the simple top of a projective module isomorphically to the socle of the next module. The irreducible morphisms between these complexes and their Auslander-Reiten translates are represented by the obvious inclusions and surjections between such complexes of adjacent lengths. 
\end{enumerate}
\end{theorem}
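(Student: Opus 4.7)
The plan is to establish part (3) first by induction on the distance $n$ from the rim, and then to read off parts (1) and (2) from the resulting explicit description of $C_n$. The base case $n \le 2$ is supplied by Proposition~\ref{projectives-on-rim}, which yields $C_0 = P_S$, $C_1 = \nu^{-1}\calP_{\Omega^{-1}S} = (\nu^{-1}P_S \to P_S)$, and $\nu C_2[-1] = \calH_S$, so that $C_2 = \nu^{-1}\calH_S[1] = (\nu^{-2}P_S \to \nu^{-1}P_S \to P_S)$ in degrees $2,1,0$, matching the claimed formula.

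For the inductive step of (3), set $D_n := (\nu^{-n}P_S \to \nu^{-(n-1)}P_S \to \cdots \to P_S)$ with each differential the canonical top-to-socle map. I would first verify that $D_n$ is minimal (its differentials are radical-valued, so no term splits off), hence indecomposable, with nonzero homology in both top and bottom degree. Assuming inductively that $C_{n-1}\cong D_{n-1}$ and $C_{n-2}\cong D_{n-2}$, I would then identify $D_n$ with $C_n$ by analysing the AR triangle ending at $C_{n-1}$,
\[
\nu C_{n-1}[-1] \to C_{n-2} \oplus \nu C_n[-1] \to C_{n-1}.
\]
Via Happel's construction, this triangle is realised as the mapping cone of a generator of the socle of $\Hom(\nu C_{n-1}[-1], C_{n-1})$, and a direct term-by-term computation using the explicit form of $D_{n-1}$ shows that the resulting middle term visibly splits off a copy of $D_{n-2}$, leaving $\nu D_n[-1] = (\nu^{-(n-1)}P_S \to \cdots \to \nu P_S)$ as the complement, so that $C_n = D_n$. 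Equivalently, one can exhibit the obvious inclusion $D_{n-1}\hookrightarrow D_n$ as the subcomplex supported in degrees $\le n-1$, together with the obvious surjection $D_n \twoheadrightarrow \nu^{-1}D_{n-1}[1]$ killing the degree-zero $P_S$, and argue these are irreducible morphisms, which by Wheeler's $\ZZ A_\infty$ classification (Theorem~\ref{a-infinity-component-theorem}) places $D_n$ at the position of $C_n$.

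For part (1), direct computation on $D_n$ yields $H_n(C_n) = \nu^{-n}\Rad P_S$, $H_0(C_n) = P_S/\Soc P_S$, and $H_i(C_n) = \nu^{-i}H_S$ for $0 < i < n$, since in each interior degree the image of the incoming differential is the simple socle and the kernel of the outgoing differential is the radical. The full homology diagram is then filled in using the identification $H_0(\nu^j C_n[-j]) \cong \nu^j H_j(C_n)$. Every mesh whose right-hand term is not a shift of a projective yields a short exact sequence of homology modules by Lemma~\ref{split-long-exact-sequence-lemma}; the two meshes marked $\times$ are precisely the two AR triangles of Proposition~\ref{projectives-on-rim} in which $P_S$ (respectively $\nu^{-1}P_S[1]$) appears as right-hand term, and Lemma~\ref{split-long-exact-sequence-lemma} fails there. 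Splitness of the generic SES is read off by matching composition factors against the known indecomposable summands of the middle term via Krull-Schmidt. The unique non-split SES is the one directly beneath $P_S$, relating $\Rad P_S$, $P_S\oplus H_S$, and $P_S/\Soc P_S$; this is the sequence obtained from the AR triangle $\calP_{\Omega^{-1}S}[-1] \to P_S\oplus \calH_S \to \nu^{-1}\calP_{\Omega^{-1}S}$ of Proposition~\ref{projectives-on-rim}, and it is an Auslander-Reiten sequence of $\Lambda$-modules. Part (2) is then immediate, since $H_S$ is the common value at every position sufficiently far from the rim.

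The main obstacle lies in the inductive identification of $D_n$ with $C_n$. One must either establish rigorously that the obvious inclusion $D_{n-1}\hookrightarrow D_n$ is irreducible in the AR quiver, or alternatively perform the AR-triangle mapping-cone computation in full and show that the middle term decomposes exactly as $C_{n-2} \oplus \nu C_n[-1]$. Both routes require sufficient control on $\Hom(C_{n-1},C_{n-1})$ and $\Hom(\nu C_{n-1}[-1], C_{n-1})$ to rule out extraneous decompositions of the middle term.
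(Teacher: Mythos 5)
Your proposal reverses the logical order the paper uses. The paper first establishes the homology diagram in part (1) by combining Proposition~\ref{projectives-on-rim}, Lemma~\ref{split-long-exact-sequence-lemma}, and the ladder argument of Theorem~\ref{homology-picture}, and only then proves part (3): having the homology of $C_{n+1}$ pinned down by (1), a mapping-cone computation determines the projective terms, and a dimension count then forces the differentials. You instead prove (3) outright by an explicit inductive identification of $C_n$ with $D_n$, and then read off (1) and (2) by computing $H_*(D_n)$ directly. Both routes rest on the same ingredients (Proposition~\ref{projectives-on-rim} for the base case, the $\ZZ A_\infty$ shape, the mapping cone of a socle map), but the paper's ordering lets it avoid identifying the differentials of $C_{n+1}$ inside the induction, since they are forced afterward by the already-known homology. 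Your ordering is conceptually cleaner but, as you note, shoulders the full burden of that mapping-cone identification within the inductive step.

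Two points need repair. First, ``minimal, hence indecomposable'' is not a valid inference: minimality only rules out contractible summands, not direct-sum decompositions into two non-contractible complexes. The correct argument that $D_n$ is indecomposable is that each nonzero term is an indecomposable projective and each differential is nonzero, so in any decomposition $D_n = A\oplus B$ each $(D_n)_i$ lies entirely in $A$ or entirely in $B$, and the nonzero differentials then force all terms into the same summand. Second, ``Splitness of the generic SES is read off by matching composition factors \dots via Krull--Schmidt'' is not a proof: having the right composition factors does not make a short exact sequence split. You need either the paper's ladder argument (equal wing rims force the relevant morphisms in the mesh to be isomorphisms, whence the SES splits), or, since you already have the explicit inclusions $D_{m-1}\hookrightarrow D_m$ and surjections $D_m\twoheadrightarrow \nu^{-1}D_{m-1}[1]$ from part (3), a direct check that one component of the middle map of each generic mesh is an isomorphism on $H_0$.
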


\begin{proof}
(1)
The fact that all meshes correspond to short exact sequences except the ones marked $\times$ follows from Lemma~\ref{split-long-exact-sequence-lemma} and the description of part of the quiver component given in Proposition~\ref{projectives-on-rim}, as in the proof of Proposition~\ref{homology-pattern}. The zero homology of the complexes shown in Proposition~\ref{projectives-on-rim} has already been computed before that proposition, and accounts for terms on the rim and the mesh below $P_S$. The facts that the remaining meshes correspond to split short exact sequence with maps between identically labelled terms being isomorphisms, and that the homology stabilizes at $H_S$, follow by the same argument as in Theorem~\ref{homology-picture}. We may deduce that the mesh below $P_S$ corresponds to an Auslander-Reiten sequence by observing from our earlier calculation that the component maps to $P_S$ and $H$ in the mesh are mono and epi, while the component maps in the mesh from $P_S$ and $H$ are epi and mono. This identifies the sequence as a well-known Auslander-Reiten sequence \cite[Sect. 4.11]{AR}. We will also see in Proposition~\ref{AR-complex} by a different argument that the short exact sequence under $P_S$ is an Auslander-Reiten sequence.

(2) follows from the picture in (1).

(3) We show that $C_n$ can be identified as stated by induction on $n$. We see from Proposition~\ref{projectives-on-rim} that the expressions for $C_0$, $C_1$ and $C_2$ are correct. Now suppose that $n\ge 2$, that the result is true for $n$ and smaller values, and consider $C_{n+1}$. From the general structure of the quiver shown in Figure~\ref{AR-quiver-self-injective} the mapping cone of $\nu^{-1}C_n\to C_n$ is $C_{n+1}\oplus\nu^{-1}C_{n-1}[1]$. The mapping cone has the form
$$
\nu^{-n-1} P_S\to(\nu^{-n} P_S)^2\to\cdots\to(\nu^{-2} P_S)^2\to(\nu^{-1} P_S)^2\to P_S
$$
and $\nu^{-1}C_{n-1}[1]$ has the form
$$
\nu^{-n} P_S\to\cdots\to\nu^{-2} P_S\to\nu^{-1} P_S
$$
so that 
$$
C_{n+1}\cong \nu^{-n-1} P_S\to\nu^{-n} P_S\to\cdots\to\nu^{-2} P_S\to\nu^{-1} P_S\to P_S.
$$
From the homology diagram in part (1) we see that the homology modules of $C_{n+1}$ are $\nu^{-n-1}\Rad P_S,\;\nu^{-n} H_S, \ldots,\nu^{-1} H_S, P_S/\Soc P_S$ and by a dimension count it follows that each map in $C_{n+1}$ sends the simple top of each projective to the socle of the next.
\end{proof}

\section{The position of short perfect complexes in the Auslander-Reiten quiver}

We deduce consequences of the patterns we have just seen. Throughout we assume that $\Lambda$ is a self-injective algebra with no semisimple summand and we consider the Auslander-Reiten quiver of perfect complexes.

\begin{corollary}\label{length-distance} 
If complexes on the rim of a quiver component have length $t$ then complexes at distance $r$ from the rim have length $t+r$.
\end{corollary}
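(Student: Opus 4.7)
The length of an indecomposable minimal perfect complex over the self-injective algebra $\Lambda$ equals the span of degrees in which it has non-vanishing homology, by the proposition preceding the definition of length. Moreover, since the entry at row $r$, horizontal position $c$, of the homology diagram of Figure~\ref{homology-diagram} is (up to a Nakayama twist) the module $H_{r/2-c}(C_r)$, the length of a complex in row $r$ equals the horizontal span of non-zero entries in that row. My plan is therefore to prove that this span equals $t+r$ for every $r$.

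The argument is by induction on $r$, the base case $r=0$ being the hypothesis on the rim. For the inductive step in a quiver component without a projective module, I carry along the stronger hypothesis that the non-zero entries on rows $r$ and $r-1$ form contiguous intervals of spans $t+r$ and $t+r-1$, with the row-$(r-1)$ interval nested strictly inside the row-$r$ one by a half column on each side. I then look at the mesh whose right row-$r$ vertex $R$ is the leftmost non-zero entry of row $r$: its left row-$r$ vertex $L$ lies outside the row-$r$ interval and its top row-$(r-1)$ vertex $T$ lies outside the row-$(r-1)$ interval, so $L=T=0$. Applying the SES of Proposition~\ref{homology-pattern}(1) to this mesh, $0\to L\to T\oplus B\to R\to 0$, forces the bottom row-$(r+1)$ vertex $B$ to be isomorphic to $R$, and hence non-zero. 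A mirror argument on the right extends the row-$(r+1)$ non-zero region by a half column on each side. For any row-$(r+1)$ vertex strictly outside this extended region, all three neighbours in its mesh are zero and the same SES forces it to vanish, so the span on row $r+1$ is exactly $t+r+1$.

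For a quiver component containing a projective module $P_S$, the two meshes adjacent to $P_S$ fall outside the scope of Lemma~\ref{split-long-exact-sequence-lemma}, so the inductive argument above does not immediately apply. Rather than adapt the mesh SES argument, I appeal to Theorem~\ref{projective-homology-picture}(3), which exhibits $C_r$ explicitly as the indecomposable complex $\nu^{-r}P_S\to\cdots\to\nu^{-1}P_S\to P_S$ with exactly $r+1$ non-zero terms; since $t=1$ for the rim of such a component, this gives length $r+1=t+r$ directly.

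The main obstacle I anticipate is verifying that the induction carries: one must track rows $r$ and $r-1$ simultaneously so that the top vertex of the crucial mesh is guaranteed zero, and one must also handle the triangular rim-adjacent meshes (three vertices rather than four) separately when launching the induction at $r=1$. The projective-component case is delicate but is cleanly bypassed by quoting the explicit description in Theorem~\ref{projective-homology-picture}(3).
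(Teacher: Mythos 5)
Your argument is correct and reaches the right conclusion, but it is organized differently from the paper's proof, and it has one small piece of over-claiming worth flagging.

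The paper proves the corollary in two sentences by observing that length equals the horizontal span of non-zero entries in a row of the homology diagram (using Proposition~2.3, which guarantees non-zero homology in the extreme degrees of a minimal complex), and then citing Theorems~\ref{homology-picture} and~\ref{projective-homology-picture}, whose statements already exhibit the span growing by exactly one per step from the rim. You take the same starting observation (length equals span of non-zero entries in the row) but, for components without a projective, you re-derive the span increment directly from the mesh short exact sequences of Proposition~\ref{homology-pattern}(1) by a two-row induction, rather than invoking Theorem~\ref{homology-picture}. In effect you are inlining a fragment of the proof of Theorem~\ref{homology-picture}(1)--(3); this is more self-contained but longer. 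Your treatment of the projective case is the same as the paper's (both reduce to Theorem~\ref{projective-homology-picture}(3)), and your separate handling of the triangular meshes at $r=1$ is correct and necessary.

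The one genuine imprecision: you state your inductive hypothesis as ``the non-zero entries on rows $r$ and $r-1$ form contiguous intervals,'' but the argument you give never establishes contiguity for row $r+1$ --- the mesh SES $0\to L\to T\oplus B\to R\to 0$ with $L=T=0$ pins down the two new endpoints and shows everything strictly outside them vanishes, but says nothing about the interior of row $r+1$. Fortunately contiguity is never used: the induction only consumes the locations of the leftmost and rightmost non-zero columns $\ell_{r-1},m_{r-1},\ell_r,m_r$ and the relations $\ell_{r-1}=\ell_r+\tfrac12$, $m_{r-1}=m_r-\tfrac12$, and since length is determined by the extreme non-vanishing homology degrees (Proposition~2.3), gaps in the interior would not affect the conclusion anyway. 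So you should either drop the word ``contiguous'' from the hypothesis and track only the endpoints, or, if you want contiguity for its own sake, prove it separately via Proposition~\ref{homology-pattern}(2) (composition factors of an interior entry are the union over its wing rim, hence non-zero whenever that wing rim meets the non-zero part of the rim). As written, the stated inductive hypothesis is stronger than what is proved, which is a minor but real gap in the exposition.
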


\begin{proof}
This follows from Theorems \ref{homology-picture} and \ref{projective-homology-picture}, since the length of a complex is determined from its extreme non-vanishing homology modules, and so equals the length of the part of the row in the homology diagram where the terms are non-zero. These theorems show that this length grows by 1 with each step away from the rim.
\end{proof}

We now use this observation to restrict the distance from the rim of arbitrary perfect complexes: the distance of a perfect complex from the rim  of its quiver component is at most one less than its length. This bound is best possible, and we describe the complexes which achieve it. This extends the assertion in Proposition~\ref{projectives-on-rim} that complexes of length 1 lie on the rim.

\begin{corollary}\label{distance-n-complexes}
For each $n\ge 0$, the indecomposable perfect complexes of length $n+1$ all lie within distance $n$ from the rim of their quiver component.
The only perfect complexes of length $n+1$ which are at distance $n$ from the rim of their quiver component are the ones described in Theorem~\ref{projective-homology-picture} which appear in the component of a projective module $P_S$ associated to a simple module $S$. These are the complexes
$$
\nu^{-n} P_S\to\cdots\to\nu^{-2} P_S\to\nu^{-1} P_S\to P_S
$$
where every mapping sends a simple top isomorphically to the next simple socle. 
\end{corollary}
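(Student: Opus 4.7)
The plan is to derive this directly from Corollary~\ref{length-distance} together with Theorem~\ref{projective-homology-picture}. The content of Corollary~\ref{length-distance} is that the length function is constant on horizontal rows of the quiver and increases by exactly $1$ with each step away from the rim. Since every non-zero indecomposable perfect complex has length at least $1$, the complexes on the rim of any component have length $t\ge 1$, and a complex at distance $r$ from the rim then has length $t+r$. Consequently, if an indecomposable complex has length $n+1$ and lies at distance $r$ from the rim, then $n+1 = t+r \ge 1+r$, so $r\le n$. This gives the first assertion.

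For the second assertion, I would observe that equality $r=n$ forces $t=1$. A complex of length $1$ is (the shift of) a single indecomposable projective module, so the component in question contains an indecomposable projective module in degree $0$, up to shift. By Proposition~\ref{projectives-on-rim} every indecomposable projective module appears on the rim of a unique quiver component (and by the corollary following Proposition~\ref{projectives-on-rim} there is only one projective module, up to shift, in that component). Writing this projective as $P_S$ for an appropriate simple module $S$, the quiver component is the one described in Theorem~\ref{projective-homology-picture}, and by part (3) of that theorem the complex $C_n$ at distance $n$ from the rim is exactly
$$
\nu^{-n}P_S\to\cdots\to\nu^{-2}P_S\to\nu^{-1}P_S\to P_S
$$
with each differential sending the simple top of one projective isomorphically to the simple socle of the next. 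Applying the Nakayama functor and shifting accounts for the other complexes on the same horizontal row of the homology diagram, which have the same form up to the action of $\nu$ and suspension.

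I do not expect any real obstacle: the work has already been done in Corollary~\ref{length-distance} and Theorem~\ref{projective-homology-picture}. The one point requiring a little care is the step ``length $1$ on the rim $\Rightarrow$ the rim contains an indecomposable projective'', which uses the description of minimal complexes (their homology is non-zero in the highest and lowest non-zero degrees) together with the fact that a one-term minimal complex is precisely an indecomposable projective module placed in a single degree. Once this is in hand, the classification of such quiver components in Theorem~\ref{projective-homology-picture} closes the argument.
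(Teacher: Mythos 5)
Your proposal is correct and follows essentially the same route as the paper: apply Corollary~\ref{length-distance} to get the distance bound $r\le n$, observe that equality forces the rim to consist of length-1 (hence projective) complexes, and then invoke Theorem~\ref{projective-homology-picture}(3) to identify the complexes at distance $n$. Your extra remarks about minimal complexes and uniqueness of the projective in the component are accurate but not strictly needed; the paper states the argument more tersely.
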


\begin{proof}
By Corollary~\ref{length-distance} all complexes of length $n+1$ must lie at distance $n$ or less from the rim. If such a complex lies at distance $n$ then by the same corollary the complexes on the rim of the quiver component have length 1, and so are projective modules $P_S$ in a single degree. We saw in Theorem~\ref{projective-homology-picture} that the complexes at distance $n$ from the rim in such a quiver component have the form stated.
\end{proof}

The following is a particular case of the last result.

\begin{corollary}\label{2-term-on-the-rim}
Any indecomposable perfect complex with two non-zero terms, not of the form $P_S\to \nu P_S$ for some simple module $S$ (where the map sends the top of $P_S$ isomorphically to the socle of $\nu P_S$), lies on the rim of its quiver component.
\end{corollary}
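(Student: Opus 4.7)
The plan is to derive this as a direct specialization of Corollary~\ref{distance-n-complexes}. An indecomposable perfect complex with two non-zero terms has length $n+1$ with $n=1$, so by Corollary~\ref{distance-n-complexes} it lies at distance $0$ or $1$ from the rim of its quiver component. Complexes at distance $0$ are by definition on the rim, which is the conclusion we want, so the only thing to rule out is the case of distance exactly $1$.

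First I would invoke Corollary~\ref{distance-n-complexes} with $n=1$: any perfect complex of length $2$ that sits at distance $1$ from the rim must be of the form $\nu^{-1}P_S \to P_S$ for some simple module $S$, with the differential sending the simple top of $\nu^{-1}P_S$ isomorphically to the simple socle of $P_S$. So any two-term indecomposable perfect complex either lies on the rim, or has this specific form.

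Next I would translate $\nu^{-1}P_S \to P_S$ into the shape stated in the corollary by a change of variable. Since $\nu$ permutes the isomorphism classes of indecomposable projectives, $P_S = \nu P_T$ for the simple module $T$ with $P_T \cong \nu^{-1}P_S$, and the socle of $\nu P_T$ equals the socle of $P_S$, which is simple. Under this relabeling the complex becomes $P_T \to \nu P_T$ with the map sending the simple top of $P_T$ isomorphically to the socle of $\nu P_T$, which is exactly the excluded form in the statement. Consequently, any two-term indecomposable perfect complex not of this excluded form must lie on the rim.

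There is no real obstacle here beyond keeping track of the identification $\nu^{-1}P_S \to P_S \;\cong\; P_T \to \nu P_T$; the substantive work has already been done in Corollary~\ref{distance-n-complexes}, itself a consequence of Corollary~\ref{length-distance} and the description in Theorem~\ref{projective-homology-picture} of the distance-$1$ complexes inside a quiver component containing a projective module.
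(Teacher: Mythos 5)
Your proof is correct and follows essentially the same route as the paper's, which also reduces to the classification of two-term complexes at distance one from the rim via the length--distance relationship and the description of components containing a projective. The only cosmetic difference is that the paper invokes Corollary~\ref{length-distance} together with Proposition~\ref{projectives-on-rim} directly, whereas you route through Corollary~\ref{distance-n-complexes} (itself derived from those same facts) and make the relabeling $\nu^{-1}P_S\to P_S\cong P_T\to\nu P_T$ explicit.
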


\begin{proof}
If it did not lie on the rim, the complexes on the rim of its quiver component would have to have length 1 by Corollary~\ref{length-distance}, so would be shifts of a projective module $P_S$ for some simple module $S$. In that case the 2-term complexes next to the rim have the form which has been excluded, by Proposition~\ref{projectives-on-rim}.
\end{proof}

\begin{corollary}\label{homology-string-length}
In an indecomposable perfect complex at distance $n$ from the rim, any non-zero homology module must occur as part of a string of at least $n+1$ non-zero consecutive homology modules. 
\end{corollary}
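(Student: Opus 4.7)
The plan is to reduce to the representative $C_n$ of the quiver component and exploit the structure of the homology diagram developed in the previous section. Since every indecomposable perfect complex at distance $n$ from the rim has the form $X=\nu^{-k}C_n[k]$ for some integer $k$, we have $H_j(X)\cong \nu^{-k}H_{j+k}(C_n)$; the Nakayama functor is an exact equivalence, so a string of $n+1$ consecutive non-zero homology modules of $C_n$ translates into one for $X$. It therefore suffices to prove that every non-zero $H_j(C_n)$ is contained in a string of at least $n+1$ consecutive non-zero homology modules of $C_n$.

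First I would dispose of the case where the quiver component contains a projective module $P_S$. Here Theorem~\ref{projective-homology-picture}(3) identifies $C_n$ as the $(n+1)$-term complex $\nu^{-n}P_S\to\cdots\to\nu^{-1}P_S\to P_S$, and the homology row in the diagram of part~(1) of the same theorem exhibits its homology modules as the $n+1$ consecutive non-zero modules $\nu^{-n}\Rad P_S,\nu^{-n+1}H_S,\ldots,\nu^{-1}H_S,P_S/\Soc P_S$. Every non-zero homology of $C_n$ thus lies in this single block of length exactly $n+1$.

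For a quiver component not containing a projective module I would invoke Proposition~\ref{homology-pattern}(2): the composition factors of the row-$n$ entry at any given position equal the union of those of the $n+1$ rim entries in its wing rim, and consequently the row-$n$ entry is non-zero precisely when at least one rim entry in its wing rim is non-zero. By translation invariance of the $\ZZ A_\infty$ component, any rim position $c$ lies in the wing rim of exactly $n+1$ consecutive row-$n$ positions, at all of which the entry is therefore non-zero. The set of non-zero row-$n$ entries is then the union of such blocks of length $n+1$, and every non-zero entry belongs to at least one of them; reading this back as a statement about the degrees at which $H_j(C_n)$ is non-zero completes the argument.

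The main obstacle is the combinatorial verification, for the non-projective case, that the set of row-$n$ positions whose wing rim contains a fixed rim position $c$ really is a contiguous segment of $n+1$ positions rather than a scattered or gappy subset. This is purely a fact about the $\ZZ A_\infty$ quiver, following from the translation invariance of the component (so that the wing rims of successive row-$n$ positions differ by a single rim step on each side); once this is in hand, the appeal to Proposition~\ref{homology-pattern}(2) converts it immediately into the desired string-length lower bound.
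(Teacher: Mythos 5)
Your proof is correct and follows essentially the same route as the paper's: split into the projective component case (handled by Theorem~\ref{projective-homology-picture}) and the non-projective case, where Proposition~\ref{homology-pattern}(2) shows that a distance-$n$ entry is non-zero exactly when some wing-rim entry is non-zero, and the combinatorics of wings in $\ZZ A_\infty$ then gives the contiguous block of length $n+1$. The only difference is that you spell out the wing-rim combinatorics and the reduction to a representative $C_n$ (there is a harmless sign slip in $H_j(X)\cong\nu^{-k}H_{j+k}(C_n)$, which should be $\nu^{-k}H_{j-k}(C_n)$, but this does not affect the argument), whereas the paper states the same facts more tersely.
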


\begin{proof}
The result is true for quiver components containing a projective module by Theorem~\ref{projective-homology-picture}. For other components,
non-zero homology comes about from non-zero homology on the rim, by Proposition~\ref{homology-pattern}. Each non-zero term on the rim gives rise to $n+1$ non-zero consecutive homology modules at distance $n$. This is the only way we get non-zero homology at distance $n$ from the rim and so every non-zero homology module must be part of a string of $n+1$ non-zero homology modules.
\end{proof}

We show that the condition of Corollary~\ref{homology-string-length} forces certain complexes to lie on, or close to, the rim.

\begin{corollary}\label{0H0-criterion}
Any indecomposable perfect complex $X$ with a sequence of homology modules of the form $H_{d+1}(X)=0$, $H_{d}(X)\ne 0$ and $H_{d-1}(X)=0$ must lie on the rim of its quiver component.
\end{corollary}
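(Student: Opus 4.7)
The plan is to apply Corollary~\ref{homology-string-length} directly. Suppose $X$ sits at distance $n$ from the rim of its quiver component. The corollary says that any non-zero homology module of $X$ lies in a string of at least $n+1$ consecutive non-zero homology modules of $X$. The hypothesis identifies $H_d(X)$ as a non-zero homology module, and the flanking conditions $H_{d+1}(X)=0$ and $H_{d-1}(X)=0$ show that the maximal string of consecutive non-zero homology modules of $X$ containing $H_d(X)$ has length exactly $1$.

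Consequently $n+1\le 1$, i.e. $n=0$, which is what it means for $X$ to lie on the rim of its quiver component. There is no case analysis required between components that contain a projective module and those that do not, because Corollary~\ref{homology-string-length} already treats both situations uniformly.

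The only thing to check is that the proof does not accidentally need $X$ itself to be of some restricted form; it does not, since the statement of Corollary~\ref{homology-string-length} is about arbitrary indecomposable perfect complexes. In particular, there is no genuine obstacle: once Corollary~\ref{homology-string-length} is in hand, this result follows in a single line.
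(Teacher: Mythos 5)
Your proof is correct and is essentially the paper's own argument: both apply Corollary~\ref{homology-string-length} to the length-one string $\{H_d(X)\}$ to force the distance from the rim to be zero. The only cosmetic difference is that you argue directly with a parameter $n$, while the paper phrases the same step in the contrapositive (``if $X$ were not on the rim, strings would have length at least $2$'').
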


\begin{proof}
If $X$ were not on the rim, non-zero homology strings would all have to have length at least 2 by Corollary~\ref{homology-string-length}. Since the complex has a non-zero homology string of length 1, it must lie on the rim.
\end{proof}

\begin{proposition}
\label{truncated-resolutions}
Let $M$ be an indecomposable module, and let $P_n\to \cdots\to P_1\to P_0\to M\to 0$ be the start of a minimal resolution of $M$. Then the complex $P_n\to \cdots\to P_1\to P_0$ is indecomposable. It lies on the rim of its quiver component unless $M=\Omega^{-1}S$ for some simple module $S$ and $n=1$, in which case the complex lies at distance 1 from the rim of its quiver component.
\end{proposition}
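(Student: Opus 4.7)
The plan is to first compute the homology of $C := (P_n \to \cdots \to P_0)$. Because the resolution is exact past $P_0$, the only possibly non-zero homology is $H_0(C) \cong M$ and $H_n(C) \cong \Omega^{n+1}M$, with all intermediate homology vanishing. When $M$ is projective the minimal resolution has length zero, so $C$ reduces to $M$ in degree $0$ and the assertion follows from Proposition~\ref{projectives-on-rim}. Hence I may assume $M$ is non-projective, in which case every $P_i$ is non-zero and $\Omega^{n+1}M$ is itself indecomposable non-projective.

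For indecomposability I would exploit that $C$ is minimal (because the resolution is) and that summands of minimal perfect complexes are minimal. By the proposition in Section~\ref{preliminaries-section}, a minimal perfect complex over a self-injective algebra has non-zero homology in the highest and lowest degrees where it has non-zero terms. So given a splitting $C = A \oplus B$, the extremal degrees of each of $A$ and $B$ must lie in $\{0, n\}$. A short case analysis on whether $A$ and $B$ are concentrated in a single degree or span $[0,n]$, combined with the facts that each $P_i$ is non-zero for $0 \le i \le n$ and that $M = H_0(A) \oplus H_0(B)$ is indecomposable non-projective, forces one of $A, B$ to be the zero complex.

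For the position in the quiver, I would split into cases on $n$. When $n \ge 2$, the homology satisfies $H_{-1}(C) = H_1(C) = 0$ and $H_0(C) = M \neq 0$, so Corollary~\ref{0H0-criterion} places $C$ on the rim. When $n = 1$ the complex is precisely $\calP_M$, and Corollary~\ref{2-term-on-the-rim} places it on the rim unless it has the form $P_S \to \nu P_S$ with the differential sending top isomorphically to socle. A direct computation of the cokernel of such a differential identifies this exceptional form as occurring exactly when $M = \nu P_S / S = \Omega^{-1}S$; and in that case Proposition~\ref{projectives-on-rim} exhibits $\calP_{\Omega^{-1}S}$ at distance $1$ from the rim in the quiver component of $P_S$.

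The main obstacle is the indecomposability step: the positional information is essentially a one-line appeal to previously established results, whereas the case analysis ruling out non-trivial splittings of $C$ is where one genuinely has to combine the minimality of the projective resolution with the indecomposability of both $M$ and $\Omega^{n+1}M$.
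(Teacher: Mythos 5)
Your proposal is correct, and the quiver-position half of the argument (Corollary~\ref{0H0-criterion} for $n\ge 2$, Corollary~\ref{2-term-on-the-rim} for $n=1$, Proposition~\ref{projectives-on-rim} for $n=0$) is exactly the paper's. Where you diverge is in proving indecomposability. The paper argues directly from minimality of the chosen projective resolution: if $C=A\oplus B$ non-trivially, the summand with $H_0\cong M$ has vanishing intermediate homology, hence is again a truncated projective resolution of $M$ with terms $A_i\subseteq P_i$; it must be a proper subcomplex in some degree, contradicting minimality of the resolution. You instead invoke minimality of the complex itself (so that summands are again minimal) together with the fact that a minimal perfect complex over a self-injective algebra has non-zero homology at its extreme degrees; this confines the support of each summand to degree $0$, degree $n$, or all of $[0,n]$, and the case analysis is then settled by the indecomposability and non-projectivity of $M$ and of $\Omega^{n+1}M$. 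Both routes are sound; the paper's is shorter because it feeds the minimality hypothesis in at the level where it was given, while yours is slightly longer but uses only homological and minimality data about the complex and so would transfer to settings where the complex is not presented as a truncated resolution. One point worth making explicit in your write-up: the indecomposability and non-projectivity of $\Omega^{n+1}M$ relies on $\Omega$ being an auto-equivalence of the stable module category, which holds because $\Lambda$ is self-injective.
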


\begin{proof} If the complex were to decompose as a direct sum of two complexes, one of the summands would have $M$ as its zero homology. That summand would have projective terms, and would be the start of a smaller projective resolution of $M$. Since we chose a minimal projective resolution, this cannot happen.

If $n>1$ then the homology of the complex has its zero homology group isolated in the sense of Corollary~\ref{0H0-criterion} and so the complex lies on the rim of its quiver component. The other possibility is that $n\le 1$ so that the complex has one or two terms. If it has one term ($n=0$) then $M$ is projective and lies on the rim of the quiver by Proposition~\ref{projectives-on-rim}. If $n=1$ then by hypothesis the complex is not isomorphic to $P_S\to \nu P_S$ and so lies on the rim by 
Corollary~\ref{2-term-on-the-rim}.
\end{proof}

\section{An application: complexes with large homology}

The following result is of independent significance, in that it makes no reference to the Auslander-Reiten quiver.

\begin{theorem}
\label{big-homology-complexes}
Let $\Lambda$ be a self-injective algebra of radical length at least 3. There exist indecomposable perfect complexes with degree zero homology of arbitrarily large dimension.
\end{theorem}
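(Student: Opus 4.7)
Since $\Lambda$ has radical length at least 3, some simple module $S$ has projective cover $P_S$ of Loewy length at least 3, so the heart $H_S=\Rad P_S/\Soc P_S$ is non-zero. The plan is to produce, for each positive integer $N$, an indecomposable perfect complex whose degree-zero homology has dimension at least $N$, by constructing an indecomposable rim complex of a non-projective-containing Auslander-Reiten quiver component whose total homology (summed across all degrees) has at least $N$ composition factors. By Theorem~\ref{homology-picture}(2) the stabilization module $\Sigma$ of that quiver component then has at least $N$ composition factors, and any indecomposable complex sufficiently deep in the component has $H_0\cong\Sigma$ of dimension at least $N$.

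To build such a rim complex, I would start from the complex $\nu^{-n}P_S\to\cdots\to\nu^{-1}P_S\to P_S$ of Theorem~\ref{projective-homology-picture}(3), which lies at distance $n$ from the rim of the $P_S$-component with each differential sending the top of one projective isomorphically to the socle of the next. I would then modify this complex by prepending one further indecomposable projective $P_T$ in degree $n+1$, linked to $\nu^{-n}P_S$ by a differential whose image lies in the radical of $\nu^{-n}P_S$ but not entirely in the socle. Such a differential exists because $H_S\ne 0$ supplies a composition factor $T'$ of $P_S/\Soc P_S$ beyond the socle simple $\nu^{-1}S$; taking $T=\nu^{-n}T'$ and applying $\nu^{-n}$ to an associated map $P_{T'}\to P_S$ yields the required differential $P_T\to\nu^{-n}P_S$. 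Composition with the next differential vanishes since $\Rad\Lambda$ annihilates the socle of every module, exactly as in the verification that the complexes of Theorem~\ref{projective-homology-picture}(3) are chain complexes.

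I would then verify that the modified complex $C'_n$ is indecomposable by the standard argument that each term is an indecomposable projective and each differential has non-zero image, so any direct-sum decomposition into subcomplexes must be trivial. For an appropriate choice of top differential the top homology $H_{n+1}(C'_n)=\ker(P_T\to\nu^{-n}P_S)$ is non-zero while the adjacent homology at degree $n$ vanishes (which occurs precisely when the top differential is surjective onto $\Rad(\nu^{-n}P_S)$), so Corollary~\ref{0H0-criterion} places $C'_n$ on the rim of its quiver component. Moreover $C'_n$ cannot lie in a projective-containing component, since its pattern of projective terms does not match Theorem~\ref{projective-homology-picture}(3). The interior homology modules of $C'_n$ are isomorphic to $\nu^{-i}H_S$ for $1\le i\le n-1$, which are non-zero by hypothesis, so the total homology of $C'_n$ contributes at least $(n-1)\dim H_S$ composition factors; consequently $\dim\Sigma\ge(n-1)\dim H_S$, which exceeds $N$ for sufficiently large $n$.

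The main obstacle is arranging the modification so that the resulting complex has an isolated non-zero homology module (hence lies on the rim by Corollary~\ref{0H0-criterion}): surjectivity of the top differential onto $\Rad(\nu^{-n}P_S)$ holds automatically for Nakayama-type algebras where the radical of each indecomposable projective is cyclic, while in general one may instead have to adjust an interior differential of $C_n$, or use a prepended direct sum of projectives together with an indecomposability argument, to arrange isolation at some degree. In either case the essential input is that $H_S\ne 0$ provides sufficiently many non-socle elements of $P_S$ (and of its Nakayama twists) to construct arbitrarily long indecomposable rim complexes in non-projective-containing quiver components, whose stabilization modules therefore have arbitrarily large dimension.
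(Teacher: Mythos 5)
Your overall strategy is the same as the paper's: build an indecomposable perfect complex lying on the rim of a component that does not contain a projective, with arbitrarily many non-zero homology modules and an isolated zero so that Corollary~\ref{0H0-criterion} applies, and then appeal to the stabilization module. Your homology bookkeeping (the middle homologies are Nakayama twists of $H_S\neq0$, giving $\dim\Sigma\gtrsim(n-1)\dim H_S$) also matches the paper's count.

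However, the concrete construction of the rim complex has a genuine gap that you yourself flag but do not close. You prepend a \emph{single indecomposable} projective $P_T$ in degree $n+1$ and then require the differential $P_T\to\nu^{-n}P_S$ to be surjective onto $\Rad(\nu^{-n}P_S)$ in order to kill $H_n$. This forces $\Rad(\nu^{-n}P_S)$ to be cyclic, which is false in general: for $\Lambda=k[C_2\times C_2]$ in characteristic $2$ (a local symmetric algebra of radical length $3$), $\Rad\Lambda$ requires two generators, so no map from a single indecomposable projective surjects onto it. There is also a second, unaddressed obstruction: even when the radical is cyclic, the map $P_T\to\nu^{-n}P_S$ could be injective, giving $H_{n+1}=0$ and making $C_n'$ non-minimal, so the $0,H,0$ pattern at the top would disappear after passing to the minimal representative. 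The paper's proof resolves both issues at once by prepending the projective cover $P_{\Rad P_S}$, with no indecomposability constraint: the cover is surjective onto $\Rad P_S$ by construction, its kernel $\Omega(\Rad P_S)$ is automatically non-zero because $\Rad P_S$ is not projective for a self-injective $\Lambda$ with $P_S$ of radical length $\ge 3$, and the resulting complex $P_{\Rad P_S}\to P_S\to\nu P_S\to\cdots$ is still indecomposable because the lower segment is indecomposable (Theorem~\ref{projective-homology-picture}(3)) and the cover map has superfluous kernel, so no top summand can map to zero. You mention ``use a prepended direct sum of projectives together with an indecomposability argument'' as a possible fix --- that is exactly the paper's route, and it is the step that needs to be carried out rather than deferred.
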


\begin{proof}
We may construct a perfect complex with arbitrarily many non-zero homology modules, and  for some $H\ne0$ having terms $0,H,0$ somewhere in the list of homology groups. For example, we may take an indecomposable projective module $P_S$ of radical length at least 3, form a complex 
$$P_{\Rad P_S}\to P_S\to\nu P_S\to \nu^2 P_S\to\cdots\to $$
where the first map is the projective cover of the radical of $P_S$, and after that each map identifies the top of a module with the socle of the next. Such a complex
can be made to have arbitrarily many non-zero homology modules, and the second homology module from the left is zero.
Such a complex must lie on the rim of its quiver component by Corollary~\ref{0H0-criterion}. Considering homology at some distance from the rim the stabilizing module $\Sigma$ which appears in Corollary~\ref{homology-picture} has composition length which is the sum of the lengths of all homology modules of the starting complex. We conclude that this may be made arbitrarily large.
\end{proof}

It is perhaps surprising that even algebras as small as $k[t]/(t^3)$ (and $k[t]/(t^n)$ with $n\ge 3$)  have large indecomposable perfect complexes with $H_0$ of arbitrarily large dimension, and it conveys the sense that there are very many such complexes for these algebras. By contrast, the indecomposable perfect complexes for $k[t]/(t^2)$ are well known to form a single quiver component: the component containing the unique projective, described explicitly in Theorem~\ref{projective-homology-picture}.

The result has consequences beyond the context of self-injective algebras and, as an example, we present an application to the theory of Mackey functors. For background on this theory we refer to \cite{TW} and \cite{Webc}, and we review what we need here but we will be brief on the details. Our example has a connection with equivariant homotopy theory, for which we refer to \cite{DHM}. We consider cohomological Mackey functors for the cyclic group $G=C_p$  over a field $k$ of characteristic $p$. The structure of these Mackey functors is considered explicitly in \cite[Sec. 4]{BSW}. In \cite{DHM} the perfect complexes of such functors are computed when $p=2$, noting that they refer to cohomological Mackey functors  as $\underline \ZZ$-modules. 

There are two simple Mackey functors for $G$ over $k$, denoted $S_{1,k}$ and $S_{G,k}$ in \cite{TW}, with evaluations  $S_{1,k}(1)=k$, $S_{1,k}(G)=0$ and $S_{G,k}(1)=0$, $S_{G,k}(G)=k$, and they are cohomological. Let us write $a= S_{1,k}$ and $b=S_{G,k}$ for short. Their projective covers have the structure:
$$P_a=FP_k=
\mathop{
\begin{xy}
(0,2)*{b};
(0,-2)*{a};
\end{xy}
}
\quad\hbox{and}\quad
P_b=FP_{kG}= \mathop{
\begin{xy}
(0,8)*{a};
(-4,4)*{a};
(-4,1)*{\vdots};
(4,0)*{b};
(-4,-4)*{a};
(0,-8)*{a};
\end{xy}
}
$$
where $P_b$ has $p$ composition factors of type $a$.

\begin{theorem}
\label{cohomological-MF-theorem}
Let $k$ be a field of characteristic $p$ and $G=C_p$ a cyclic group of order $p$. If $p\ge 3$ there are perfect complexes of cohomological Mackey functors with zero homology of arbitrarily large dimension.
\end{theorem}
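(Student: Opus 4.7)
The plan is to apply Theorem~\ref{big-homology-complexes} directly. The category of cohomological Mackey functors for $G = C_p$ over $k$ is equivalent to the module category of a finite-dimensional $k$-algebra $\Lambda$, namely the cohomological Mackey algebra of $C_p$, under the identification described in \cite{TW} and \cite[Sec.~4]{BSW}. Under this equivalence, perfect complexes of cohomological Mackey functors correspond to perfect complexes of $\Lambda$-modules, and the total $k$-dimension of a Mackey functor (the sum of the dimensions of its two evaluations) equals its dimension as a $\Lambda$-module, so it will suffice to produce indecomposable perfect complexes over $\Lambda$ whose $H_0$ has arbitrarily large $k$-dimension.

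To invoke Theorem~\ref{big-homology-complexes} I would verify that $\Lambda$ is self-injective and has radical length at least $3$. Self-injectivity is standard for cohomological Mackey algebras of $p$-groups over a field of characteristic $p$, and is already visible in the displayed projectives $P_a$ and $P_b$, each of which has a simple socle, so that the Nakayama map is a permutation of the set of simples $\{a,b\}$. For the radical length, inspection of the displayed Loewy structure of $P_b$ — which carries $p$ composition factors of type $a$ together with one of type $b$, with the outer copies of $a$ separated from each other by intermediate radical layers — shows that $P_b$ has Loewy length at least $3$ whenever $p \geq 3$; hence $\Lambda$ itself has radical length at least $3$ in this range.

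With both hypotheses confirmed, Theorem~\ref{big-homology-complexes} produces indecomposable perfect complexes of $\Lambda$-modules with $H_0$ of unbounded $k$-dimension, and these pull back through the equivalence to the required perfect complexes of cohomological Mackey functors. The only step requiring genuine care is the Loewy length estimate for $P_b$ — in particular, ruling out the possibility that the middle copy of $b$ sits on the same radical layer as both the top and bottom copies of $a$, which would collapse the Loewy length of $P_b$ down to $2$ and leave the argument (correctly) failing for $p=2$. This is immediate from the explicit structure computed in \cite[Sec.~4]{BSW}; the self-injectivity and the translation between cohomological Mackey functors and $\Lambda$-modules are entirely routine.
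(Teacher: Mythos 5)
Your plan hinges on applying Theorem~\ref{big-homology-complexes} directly to the cohomological Mackey algebra $\Lambda$, and for that you need $\Lambda$ to be self-injective. It is not. You can read this off from the displayed projectives: $\dim_k P_a = 2$ while $\dim_k P_b = p+1$, and both simples $a,b$ are one-dimensional over $k$. If $\Lambda$ were self-injective, the (exact, dimension-preserving) Nakayama functor would permute the indecomposable projectives, and since $\Soc P_a \cong b$ (the only composition factor of $P_a$ other than the top), we would have $P_a = I_b = \nu P_b$, forcing $\dim P_b = \dim P_a$, i.e.\ $p+1=2$, a contradiction for every $p\ge 2$. Having a simple socle in each projective, which is what you point to, is necessary but nowhere near sufficient for self-injectivity, so the step you describe as ``entirely routine'' is in fact false, and Theorem~\ref{big-homology-complexes} cannot be invoked for $\Lambda$.

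The paper avoids this obstruction by applying Theorem~\ref{big-homology-complexes} not to $\Lambda$ but to the corner algebra $e_a\Lambda e_a \cong \End_\Lambda(P_a)^{\op} \cong k[t]/(t^p)$, which \emph{is} self-injective with radical length $p\ge 3$. A perfect complex of $k[t]/(t^p)$-modules is then lifted to a perfect complex of $\Lambda$-modules by replacing each free summand with a copy of $P_a$ and lifting the differentials through the ring isomorphism; the exact functor $M\mapsto e_aM$ recovers the original homology, so unbounded $H_0$ over $k[t]/(t^p)$ yields unbounded $H_0$ over $\Lambda$. To repair your argument you would need either this transfer step or some other way to produce the complexes directly over $\Lambda$; the direct appeal to Theorem~\ref{big-homology-complexes} does not work.
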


\begin{proof}
Cohomological Mackey functors for $G$ are the same as modules for the \textit{cohomological Mackey algebra} for $G$, defined in \cite{TW}, and which we denote $\Lambda$ here. We have $\Lambda = P_a\oplus P_b$ as left $\Lambda$-modules, and this decomposition corresponds to an expression $1 = e_a + e_b$ as a sum of orthogonal idempotents, so that $P_a\cong \Lambda e_a$ and $P_b\cong \Lambda e_b$. Thus $e_a\Lambda e_a \cong \End_\Lambda(P_a)^\op \cong k[t]/(t^p)$. If $M$ is a cohomological Mackey functor on $G$ the specification $M\mapsto e_aM$ provides an exact functor from $\Lambda$-modules to $k[t]/(t^p)$-modules.

Given a perfect complex $P_\bullet$ of $k[t]/(t^p)$-modules we can lift it to a perfect complex $\hat P_\bullet$ of cohomological Mackey functors, that maps to $P_\bullet$ under this functor, by replacing each copy of $k[t]/(t^p)$ by $P_a$, and by lifting uniquely the homomorphisms between copies of $k[t]/(t^p)$ to homomorphisms between the $P_a$. We may check that this does give a chain complex. Multiplying the homology modules of $\hat P_\bullet$ by $e_a$ we obtain the homology modules of $P_\bullet$. Because we can construct these to have arbitrarily large dimension, the homology modules of the $\hat P_\bullet$ will also have arbitrarily large dimension.
 \end{proof}

\section{Short complexes and Auslander-Reiten sequences}
\label{length-3-complexes}

In this section we study in detail complexes of lengths 2 and 3, showing that they are closely related to Auslander-Reiten sequences. We find their position in the Auslander-Reiten quiver, and show the stabilization modules of complexes of length 2 are the middle terms in Auslander-Reiten sequences.

We start with a result that makes a connection between Auslander-Reiten sequences and Auslander-Reiten triangles. It is closely related to a result in \cite{DPW2}. For this result $\Lambda$ can be any finite dimensional algebra, not necessarily one that is self-injective. We write $\tau$ for the Auslander-Reiten translate.

\begin{proposition}\label{AR-complex}
Let $\Lambda$ be a finite dimensional algebra over $k$ and $M$ a non-projective indecomposable $\Lambda$-module. Let $\calP_M = (P_1\to P_0)$ be the first two terms of a minimal projective resolution of $M$. Consider the Auslander-Reiten triangle $\nu\calP_M[-1]\to \calE_M\to\calP_M\to\nu\calP_M$ in $D^b(\Lambda\hbox{-mod})$.  
\begin{enumerate}
\item The part of the long exact sequence in homology of this triangle, whose terms are $H_0$, is the Auslander-Reiten sequence of $\Lambda$-modules $0\to\tau M\to E_M\to M\to 0$. 
\item The non-zero homology groups of the mapping cone complex $\calE_M$ are $H_1(\calE_M)=\Omega^2 M$, $H_0(\calE_M)=E_M$ and $H_{-1}(\calE_M)= \nu M$.
\end{enumerate}
\end{proposition}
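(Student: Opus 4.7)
The plan is to apply Lemma~\ref{split-long-exact-sequence-lemma} to the given Auslander-Reiten triangle and then identify the resulting degree-zero short exact sequence with the Auslander-Reiten sequence of $M$. First I verify the lemma's hypothesis: $\calP_M$ is not the shift of a projective module, because $H_0(\calP_M)=M$ is non-projective by assumption. The long exact sequence of homology therefore splits into short exact sequences
$$0\to H_i(\nu\calP_M[-1])\to H_i(\calE_M)\to H_i(\calP_M)\to 0$$
in each degree $i$.

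Next I compute the outer homologies. We already have $H_0(\calP_M)=M$ and $H_1(\calP_M)=\Omega^2 M$, with other homologies zero. For $\nu\calP_M=\nu P_1\to\nu P_0$, right-exactness of $\nu$ applied to $P_1\to P_0\to M\to 0$ gives $H_0(\nu\calP_M)=\nu M$, while the defining formula for the Auslander-Reiten translate gives $H_1(\nu\calP_M)=\ker(\nu P_1\to\nu P_0)=\tau M$. After shifting by $[-1]$, these become $H_0(\nu\calP_M[-1])=\tau M$ and $H_{-1}(\nu\calP_M[-1])=\nu M$. Plugging into the lemma's three non-trivial short exact sequences gives $H_1(\calE_M)\cong\Omega^2 M$, $H_{-1}(\calE_M)\cong\nu M$, and a short exact sequence $0\to\tau M\to H_0(\calE_M)\to M\to 0$, with other homologies zero. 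This proves (2) pending the identification of $H_0(\calE_M)$ with $E_M$.

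For (1) I aim to show that $H_0(\beta):H_0(\calE_M)\to M$ is right almost split in $\Lambda\hbox{-mod}$, where $\beta:\calE_M\to\calP_M$ is the middle map of the triangle. Given an indecomposable $\Lambda$-module $N$ and a module homomorphism $\gamma_0:N\to M$ that is not a split epimorphism, I lift $\gamma_0$ to a chain map $\tilde\gamma:\calP_N\to\calP_M$ by two uses of projectivity of the terms of $\calP_N$. Since $H_0(\tilde\gamma)=\gamma_0$ is not a split epimorphism, neither is $\tilde\gamma$ in $D^b(\Lambda\hbox{-mod})$, so the right-almost-split property of the Auslander-Reiten triangle makes $\tilde\gamma$ factor through $\beta$; applying $H_0$ exhibits $\gamma_0$ as factoring through $H_0(\beta)$. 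The main obstacle will be ruling out that $H_0(\beta)$ is itself a split epimorphism---equivalently, showing the short exact sequence is non-split. I intend to handle this by identifying the extension class of the sequence in $\Ext^1(M,\tau M)$ as the image of the non-zero connecting morphism $\calP_M\to\nu\calP_M$ of the Auslander-Reiten triangle under a natural map built from the truncation morphism $\calP_M\to M$ and the chain map $\tau M[1]\to\nu\calP_M$ coming from the inclusion $\tau M\hookrightarrow\nu P_1$. Once non-splitness is secured, uniqueness of the Auslander-Reiten sequence terminating at $M$ identifies the short exact sequence with $0\to\tau M\to E_M\to M\to 0$ and completes the proof.
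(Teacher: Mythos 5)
Your outline matches the paper's proof closely up to, and including, the lifting argument: you apply Lemma~\ref{split-long-exact-sequence-lemma} after checking $\calP_M$ is not a shifted projective, compute the outer homology of $\nu\calP_M[-1]$ and $\calP_M$, extract the short exact sequence in degree $0$, and verify the right-almost-split lifting property by lifting a non-split $N\to M$ to a chain map $\calP_N\to\calP_M$ and invoking the AR triangle. All of that is fine and is essentially what the paper does (the paper phrases it as ``not an isomorphism,'' which is equivalent to ``not split epi'' for indecomposable $N$ and $M$).

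The gap is exactly where you flag uncertainty: non-splitness of the degree-zero sequence. The ``natural map built from the truncation morphism $\calP_M\to M$ and the chain map $\tau M[1]\to\nu\calP_M$'' that you propose does not go in the direction you need. Precomposition with $\calP_M\to M$ gives a map $\Hom(M,-)\to\Hom(\calP_M,-)$, and postcomposition with $\tau M[1]\to\nu\calP_M$ gives a map $\Hom(-,\tau M[1])\to\Hom(-,\nu\calP_M)$; both land \emph{in} $\Hom(\calP_M,\nu\calP_M)$ rather than mapping \emph{out of} it into $\Ext^1_\Lambda(M,\tau M)$. One can still try to relate the connecting morphism $f\colon\calP_M\to\nu\calP_M$ to the extension class (e.g.\ by first observing from Lemma~\ref{split-long-exact-sequence-lemma} that the boundary maps $H_1(f)$ and $H_0(f)$ vanish, lifting $f$ through the truncation $\tau M[1]\to\nu\calP_M$, and then arguing via the long exact sequence of the truncation triangle $\Omega^2 M[1]\to\calP_M\to M\to\Omega^2 M[2]$ that this lift factors uniquely through $M$), but none of this is in your sketch, and several of the required vanishing and uniqueness statements have to be checked. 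As written, this step is not a proof.

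The paper avoids this route entirely: assuming a splitting $M\to H_0(\calE_M)$, it lifts $M\to Z_0(\calE_M)/B_0(\calE_M)$ through $P_0$ and then $P_1$ (using projectivity) to a chain map $\calP_M\to\calE_M$; composing with $\calE_M\to\calP_M$ gives an endomorphism of $\calP_M$ inducing the identity on $H_0=M$. Fitting's Lemma then produces a power whose image is a direct summand of $\calP_M$ that is still the start of a projective resolution of $M$, so by minimality of $\calP_M$ the endomorphism is an isomorphism, splitting the AR triangle -- a contradiction. I'd recommend you replace your Ext-class sketch with this argument, which is short and uses only minimality and Fitting's Lemma.
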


\begin{proof}
(1) We know from \cite[IV.2.4]{ASS} that $\nu \calP_M[-1]$ has homology modules 
$$H_{-1}(\nu\calP_M[-1])=\nu M
\qquad\hbox{and}\qquad 
H_0(\nu\calP_M[-1])=\tau M.
$$
By Lemma~\ref{split-long-exact-sequence-lemma}, because $M$ is not projective, the long exact sequence in homology of the triangle breaks up as a splice of short exact sequences, of which the 0-homology sequence has the form
$$
0\to \tau M\to H_0(\calE_M)\to M\to 0.
$$
This sequence has the correct end terms to be an Auslander-Reiten sequence and, in particular, they are indecomposable. We show first that the sequence has the lifting property of Auslander-Reiten sequences, and then later that it is not split. This will establish that it is indeed an Auslander-Reiten sequence.

Suppose that we have an indecomposable module $N$ and a morphism $N\to M$ which is not an isomorphism. Take the start of a minimal projective resolution $\calQ = (Q_1\to Q_0)$ of $N$ and lift the morphism $N\to M$ to a morphism of complexes $\calQ\to \calP_M$. This mapping is not split epi, because on taking zero homology it is not split epi. It therefore lifts to a morphism $\calQ\to\calE_M$ and on taking zero homology we deduce that the original morphism lifts to a morphism $N\to H_0(\calE_M)$.

To show that the sequence of zero homology groups is not split, suppose to the contrary that we have a splitting map $M\to H_0(\calE_M)= Z_0(\calE_M)/B_0(\calE_M)$ with image $U/B_0(\calE_M)$ for some submodule $U\subseteq Z_0(\calE_M)$. By the projective property of $P_0$ this lifts to a map $P_0\to U$ and hence also to a map $P_1\to (\calE_M)_1$, so that we have a chain map $\calP_M\to\calE_M$. Composing with the map $\calE_M\to\calP_M$ we obtain an endomorphism of $\calP_M$ which lifts the identity on $M$. By Fitting's Lemma some power of this endomorphism has image a summand of $\calP_M$, which again must be the start of a resolution of $M$. By minimality of $\calP_M$ this summand must be the whole of $\calP_M$, and so the endomorphism of $\calP_M$ is an isomorphism and we have split the Auslander-Reiten triangle -- a contradiction.

(2) We have just shown that taking zero homology gives an Auslander-Reiten sequence, so this gives the value of $H_0(\calE_M)$. To identify the top and bottom homology groups of $\calE_M$ we use the fact again from Lemma~\ref{split-long-exact-sequence-lemma} that the degree 1 and degree $-1$ terms in the long exact homology sequence give short exact sequences, and they are
$$
0\to 0\to H_1(\calE_M)\to H_1(\calP_M)=\Omega^2 M\to 0
$$
and
$$
 0\to H_{0}(\nu\calP_M)=\nu M \to H_{-1}(\calE_M)\to 0\to 0.
$$
This completes the proof.
\end{proof}

The construction just described has potential as a way to organize the automatic computation of Auslander-Reiten sequences,  without explicitly computing Ext groups or extensions corresponding to Ext classes. Given a module $M$, compute the first two terms $\calP_M=(P_1\to P_0)$ in a minimal projective resolution of $M$, and also the effect of the Nakayama functor $\nu\calP_M$. The next part is hard: compute a chain map $\calP_M\to\nu\calP_M$ that is not homotopic to zero, and which is annihilated up to homotopy by $\Rad(\End(\calP_M))$. Finally compute the mapping cone $\calE_M[1]$ of this map and compute the zero homology sequence of the triangle $\nu\calP_M[-1]\to\calE_M\to\calP_M\to \nu\calP_M$. This computes the Auslander-Reiten sequence ending at $M$.

The last result allows us to identify the stabilization module of any indecomposable perfect complex of length 2 over a self-injective algebra. 

\begin{corollary}
\label{length-2-stabilization-module}
Let $\Lambda$ be a self-injective algebra and $M$ a non-projective indecomposable $\Lambda$-module that is not of the form $\Omega^{-1}(S)$, for some simple module $S$. Let $\calP_M = (P_1\to P_0)$ be the first two terms of a minimal projective resolution of $M$. Then $\calP_M$ lies on the rim of its quiver component, and the homology diagram of this component has the form
{
\def\tempbaselines
{\baselineskip16pt\lineskip3pt
   \lineskiplimit3pt}
\def\diagram#1{\null\,\vcenter{\tempbaselines
\mathsurround=0pt
    \ialign{\hfil$##$\hfil&&\quad\hfil$##$\hfil\crcr
      \mathstrut\crcr\noalign{\kern-\baselineskip}
  #1\crcr\mathstrut\crcr\noalign{\kern-\baselineskip}}}\,}

\def\clap#1{\hbox to 0pt{\hss$#1$\hss}}
$$
\diagram{&\clap{0}&&&&\clap{\tau M}&&&&\clap{M}&&&&\clap{0}\cr
&&\searrow&&\nearrow&&\searrow&&\nearrow&&\searrow&&\nearrow&\cr
\cdots&&&\clap{\tau M}&&&&\clap{E_M}&&&&\clap{M}&&&\cdots\cr
&&\nearrow&&\searrow&&\nearrow&&\searrow&&\nearrow&&\searrow&\cr
&\clap{\tau M}&&&&\clap{E_M}&&&&\clap{E_M}&&&&\clap{M}\cr
&&\searrow&&\nearrow&&\searrow&&\nearrow&&\searrow&&\nearrow&\cr
&&&\clap{\vdots}&&&&\clap{\vdots}&&&&\clap{\vdots}\cr
}
$$
}
where $0\to\tau M\to E_M\to M\to 0$ is the Auslander-Reiten sequence terminating at $M$. Thus the stabilization module of $\calP_M$ is $E_M$.
\end{corollary}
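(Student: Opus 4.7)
The plan is to assemble three earlier results in sequence: Proposition~\ref{truncated-resolutions} places $\calP_M$ on the rim, Proposition~\ref{AR-complex} identifies the zero homology at distance~$1$ from the rim, and Theorem~\ref{homology-picture} fills in the rest of the diagram.

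Since $\calP_M$ is the start of a minimal projective resolution of an indecomposable non-projective module $M$ with $M\not\cong\Omega^{-1}(S)$ for any simple $S$, Proposition~\ref{truncated-resolutions} (applied with $n=1$) gives directly that $\calP_M$ is indecomposable and lies on the rim of its quiver component. The homology calculation immediately after the definition of $\calP_M$ gives $H_0(\calP_M)=M$ and $H_1(\calP_M)=\Omega^2 M$, with all other homology groups zero. Combining the identification $H_0(\nu^j C_i[-j])\cong\nu^j H_j(C_i)$ with the self-injective formula $\tau M=\nu\Omega^2 M$ (used in the proof of Proposition~\ref{AR-complex}), the rim of the homology diagram has exactly two non-zero entries: $\tau M$ at position $\nu\calP_M[-1]$ and $M$ at position $\calP_M$, in adjacent positions, with zeros elsewhere.

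The complex $C_1$ at distance~$1$ sitting between these two rim positions fits into the Auslander-Reiten triangle $\nu\calP_M[-1]\to C_1\to\calP_M\to\nu\calP_M$, which is exactly the triangle studied in Proposition~\ref{AR-complex}. Hence $C_1\cong\calE_M$, and $H_0(C_1)=E_M$, the middle term of the Auslander-Reiten sequence $0\to\tau M\to E_M\to M\to 0$ terminating at $M$.

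Finally, I would invoke Theorem~\ref{homology-picture}. Since the rim has exactly two non-zero entries, every position whose wing rim contains both of them has a wing rim supported on precisely $\tau M$ and $M$; part~(1) of that theorem forces the corresponding $H_0$'s to be pairwise isomorphic, and the calculation at $C_1$ pins their common value to $E_M$, so $E_M$ is the stabilization module. For positions whose wing rim contains only one of $\tau M$ or $M$, the mesh short exact sequences of Proposition~\ref{homology-pattern}(1), together with the zeros at the boundary of the rim, propagate copies of $\tau M$ (respectively $M$) diagonally outward to produce the two staircase patterns shown. The only step that is not pure bookkeeping within the $\ZZ A_\infty$ structure is the identification of $C_1$ with $\calE_M$, which is immediate from the uniqueness of Auslander-Reiten triangles.
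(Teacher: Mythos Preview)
Your argument is correct and follows essentially the same three-step route as the paper: locate $\calP_M$ on the rim, invoke Proposition~\ref{AR-complex} to identify $H_0$ at distance~$1$ as $E_M$, then apply Theorem~\ref{homology-picture} to propagate the pattern. The only cosmetic difference is that the paper cites Corollary~\ref{distance-n-complexes} rather than Proposition~\ref{truncated-resolutions} for the rim placement, and it leaves the rim entries and staircase bookkeeping implicit where you spell them out.
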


Note that we already identified the stabilization module of $\calP_{\Omega^{-1}(S)}$ as $H_S$ in Theorem~\ref{projective-homology-picture}, and this is the middle term of the Auslander-Reiten sequence terminating at $\Omega^{-1}(S)$, with the projective summand removed.

\begin{proof}
We know that $\calP_M$ lies on the rim of the quiver by Corollary~\ref{distance-n-complexes}, so that the rim is as described, and the second row is a consequence of Proposition~\ref{AR-complex}. We fill in the rest of the diagram using Theorem~\ref{homology-picture}.
\end{proof}

When $\Lambda$ is a symmetric algebra, the modules in an Auslander-Reiten sequence appear not only as the zero homology of complexes in a triangle, but also as the homology groups of a single complex.

\begin{corollary}
\label{AR-as-homology}
If $\Lambda$ is a symmetric algebra and $M$ is a non-projective indecomposable $\Lambda$-module, there is a 3-term perfect complex $\calE_M$ with $H_1(\calE_M)=\tau M$, $H_0(\calE_M)=E$ and $H_{-1}(\calE_M)= M$, where $0\to\tau M\to E_M\to M\to 0$ is the Auslander-Reiten sequence of $\Lambda$-modules terminating at $M$
\end{corollary}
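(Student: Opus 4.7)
The plan is to derive this directly from Proposition~\ref{AR-complex} by invoking two standard simplifications that hold in the symmetric case. For the given non-projective indecomposable $M$, form $\calP_M = (P_1 \to P_0)$ and the Auslander-Reiten triangle $\nu\calP_M[-1] \to \calE_M \to \calP_M \to \nu\calP_M$; take $\calE_M$ to be the middle term, realized as the desuspended mapping cone of $\calP_M \to \nu\calP_M$, so that its terms in degrees $1,0,-1$ are $P_1$, $P_0 \oplus \nu P_1$, $\nu P_0$ respectively.

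Because $\Lambda$ is symmetric, the Nakayama bimodule $D\Lambda$ is isomorphic to $\Lambda$, and hence the Nakayama functor $\nu$ is naturally isomorphic to the identity on $\Lambda\hbox{-mod}$. Two consequences follow: every term of $\calE_M$ is projective, so $\calE_M$ is genuinely a perfect complex; and combining $\nu \cong \mathrm{id}$ with the formula $\tau \cong \nu \circ \Omega^2$ valid for any self-injective algebra gives $\tau M \cong \Omega^2 M$ and $\nu M \cong M$.

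It then remains only to read off the homology of $\calE_M$ from Proposition~\ref{AR-complex}(2), which asserts $H_1(\calE_M) = \Omega^2 M$, $H_0(\calE_M) = E_M$, and $H_{-1}(\calE_M) = \nu M$. Substituting the isomorphisms from the previous paragraph converts these to $\tau M$, $E_M$, $M$, precisely the three terms of the Auslander-Reiten sequence $0 \to \tau M \to E_M \to M \to 0$.

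There is essentially no obstacle; all the substantive content is already in Proposition~\ref{AR-complex}. The only point requiring mild care is that $\Omega$, $\nu$, and $\tau$ are each defined only up to projective summands in general, but for the indecomposable non-projective module $M$ at hand, the relevant modules $\Omega^2 M$, $\nu M$, and $\tau M$ carry no projective summands, so the isomorphisms above are genuine module isomorphisms rather than merely stable-category ones.
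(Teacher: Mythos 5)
Your proof is correct and follows essentially the same route as the paper: both invoke Proposition~\ref{AR-complex} and then use that for a symmetric algebra $\nu$ is the identity, so $\tau M\cong\Omega^2 M$ and $\nu M\cong M$. Your added remarks about the explicit terms of the mapping cone and the care with projective summands are fine but go slightly beyond what the paper records, which dispatches the argument in two lines.
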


\begin{proof}
When $\Lambda$ is symmetric the Nakayama functor is the identity and $\tau M\cong\Omega^2 M$. The complex $\calE_M$ of Proposition~\ref{AR-complex} has the desired properties.
\end{proof}

%It is not always possible to construct perfect complexes with prescribed homology groups. One constraint on the homology groups is that their alternating sum must lie in the span of projective modules in the appropriate Grothendieck group. Even when this condition is satisfied, it is still not always possible to realize a list of groups as homology groups, and we mention in this context the result of Carlsson and Allday-Puppe, for which a different approach has been given by Avramov, Buchweitz, Iyengar and Miller~\cite{ABIM}: if $\calP$ is a perfect complex of $kG$-modules for an elementary abelian $p$-group $G=C_p^r$, where $k$ is a field of characteristic $p$, then
%\babble{State as a corollary that the sum of lengths of terms in an AR sequence is $\ge r+1$. In fact $M$ and $\Omega^2M$ are the homology of a complex, so it is weaker to say that the terms of an ARS are the homology of a complex.}
%$$\sum_i \hbox{Loewy length }H_i(\calP)\ge r+1.$$
%This non-existence result complements the content of Corollary~\ref{AR-as-homology} and allows us to see that even with three modules which appear in a short exact sequence, it is not always possible to realize those modules as the homology of a perfect complex of length 3.

We can now characterize the positions of complexes of length 3 in the Auslander-Reiten quiver. Given a non-projective indecomposable module $M$, we continue with the notation that $\calE_M$ is the complex that appears in the Auslander-Reiten triangle $\nu\calP_M[-1]\to\calE_M\to\calP_M\to \nu\calP_M$.

\begin{proposition}
\label{3-term-complexes-position}
Let $\Lambda$ be a self-injective algebra and $M$ an indecomposable non-projective $\Lambda$-module. 
\begin{enumerate}
\item
The complex $\calE_M$ is indecomposable unless $M=\Omega^{-1}S$ for some simple module $S$, in which case $\calE_M$ is the direct sum of an indecomposable 3-term complex and $P_S$ as a complex in degree 0. 
\item
An indecomposable 3-term perfect complex lies on the rim of its quiver component unless, up to shift, it is either of the form $\calE_M$ for some $M\not\cong\Omega^{-1}S$ (in which case it lies at distance 1 from the rim) or it is the indecomposable 3-term summand of $\calE_{\Omega^{-1}S}$ for some simple $\Lambda$-module $S$ (in which case it lies at distance 2 from the rim).
\end{enumerate}
\end{proposition}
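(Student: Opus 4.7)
The plan is to determine the position of $\calP_M$ in the Auslander-Reiten quiver of its component, then read off the decomposition of the middle term $\calE_M$ directly from the $\ZZ A_\infty$ shape of the component; this handles part (1). For part (2), I start with an arbitrary indecomposable 3-term perfect complex $X$, use the length-versus-distance bound of Corollary~\ref{distance-n-complexes} to place it in its quiver component, and match each of the three possible distances $0,1,2$ with one of the three cases in the statement.

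For part (1), the complex $\calP_M$ is 2-term and indecomposable (the latter from minimality of the minimal projective resolution, as in Proposition~\ref{truncated-resolutions}). By Corollary~\ref{2-term-on-the-rim}, $\calP_M$ lies on the rim of its component unless $M\cong\Omega^{-1}S$, and in that case Proposition~\ref{projectives-on-rim} places it at distance $1$ from the rim of the quiver component of $P_S$. In a $\ZZ A_\infty$ component a rim vertex has a single immediate predecessor in the Auslander-Reiten quiver, so when $M\not\cong\Omega^{-1}S$ the middle term $\calE_M$ is indecomposable; Proposition~\ref{AR-complex}(2) shows it has three non-zero homology groups, so it has length $3$. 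When $M\cong\Omega^{-1}S$, the vertex $\calP_M$ at distance $1$ has two immediate predecessors, one on the rim and one at distance $2$; reading these from the local picture in Proposition~\ref{projectives-on-rim} (together with Theorem~\ref{projective-homology-picture}(3)) identifies them as a projective complex in degree $0$ (a Nakayama twist of $P_S$) and an indecomposable 3-term complex (a Nakayama twist of $\calH_S$), giving the claimed decomposition.

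For part (2), let $X$ be an indecomposable 3-term perfect complex. Corollary~\ref{distance-n-complexes} bounds its distance from the rim by $2$. If $X$ lies on the rim, there is nothing to prove. If $X$ lies at distance $2$, the same corollary forces $X$ to belong to the quiver component of some projective $P_S$ and to be, up to shift, the complex $\nu^{-2}P_S\to\nu^{-1}P_S\to P_S$, which is a Nakayama twist of the indecomposable 3-term summand of $\calE_{\Omega^{-1}S}$ produced in part (1). The remaining case is distance $1$: by Corollary~\ref{length-distance} the rim of the component now consists of length-$2$ complexes. Choosing a rim neighbour $R$ of $X$ and setting $M=H_0(R)$, I identify $R$ with $\calP_M$; Proposition~\ref{truncated-resolutions} then forces $M\not\cong\Omega^{-1}S$ (since $R$ is on the rim), and by Proposition~\ref{AR-complex} the middle term of the AR triangle ending at $R=\calP_M$ is $\calE_M$, which by the $\ZZ A_\infty$ structure is (a shift of) $X$.

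The main obstacle is the identification $R\cong\calP_M$ in the distance-$1$ case. Writing $R=(P_1\xrightarrow{f}P_0)$ with $M=\Coker f$, the task is to show that indecomposability of $R$ forces both $P_0\to M$ and $P_1\to\Im f=\Omega M$ to be projective covers. The standard argument is that any summand of $P_0$ contained in $\Im f$, or any summand of $P_1$ lying in $\ker f$, would allow splitting off a contractible 2-term complex from $R$ and contradict indecomposability. This uniqueness property of the first two terms of a minimal projective resolution is implicit in the discussion of minimal perfect complexes in Section~\ref{preliminaries-section}.
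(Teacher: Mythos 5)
Your proof is correct and follows essentially the same route as the paper's: part (1) by placing $\calP_M$ on or near the rim (Corollary~\ref{2-term-on-the-rim}, Proposition~\ref{projectives-on-rim}) and reading off the decomposition of the middle term of the Auslander--Reiten triangle from the $\ZZ A_\infty$ shape, and part (2) by bounding the distance from the rim via Corollary~\ref{distance-n-complexes} and matching the distances $0,1,2$ with the three cases. Where you add value is in spelling out the identification $R\cong\calP_{H_0(R)}$ for an indecomposable length-$2$ complex $R$ on the rim: the paper's proof cites Corollary~\ref{2-term-on-the-rim} for the claim that rim complexes of length $2$ are (up to shift) of the form $\calP_M$, but that corollary by itself does not literally assert that every indecomposable $2$-term perfect complex is a truncated minimal resolution, so your explicit change-of-basis argument closes a small gap the paper leaves implicit. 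One imprecision in that last paragraph: a projective summand of $P_1$ contained in $\ker f$ splits off from $R$ as a \emph{one-term} complex concentrated in degree $1$, not as a contractible two-term complex (the contractible two-term summand arises from a projective summand of $P_0$ inside $\Im f$); the contradiction with indecomposability holds either way, so the argument is unaffected.
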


\begin{proof}
(1) The complexes $\calP_M$ lie on the rim of the quiver by Corollary~\ref{2-term-on-the-rim} unless $M=\Omega^{-1}S$ for some simple module $S$. Apart from this exception, $\calE_M$ is the middle term of the Auslander-Reiten triangle with third term $\calP_M$, and it is indecomposable since $\calP_M$ lies on the rim. When $M=\Omega^{-1}S$ we have seen the decomposition of $\calE_M$ in Theorem~\ref{projective-homology-picture}.

(2) We have seen from Corollary~\ref{distance-n-complexes} that indecomposable 3-term complexes lie at distance at most 2 from the rim, and the ones at distance 2 have the form $\nu^{-1}P_S\to P_S\to \nu P_S$. If a 3-term complex lies at distance 1 from the rim, the complexes on the rim must have length 2, and are (up to shift) of the form $\calP_M$ with $M\not\cong\Omega^{-1}S$ for any simple $S$, by Corollary~\ref{2-term-on-the-rim}. Apart from these cases, 3-term complexes must lie on the rim.
\end{proof}

Thus, if a 3-term perfect complex has homology modules which are not the terms in an Auslander-Reiten sequence of $\Lambda$-modules (to within a projective summand of the middle term) then the complex lies on the rim of the Auslander-Reiten quiver of perfect complexes.

\section {The position of rigid complexes in the Auslander-Reiten quiver}
\label{rigid-complexes}

It seems more often than not when we consider a class of objects satisfying some given properties, that those objects lie on the rim of the Auslander-Reiten quiver, or close to the rim. 
We have seen this with short complexes, with complexes with zero homology in certain degrees, and in \cite[Theorem 6.2]{Webb} an example is given with complexes with small endomorphism rings. 
We give another of these examples, showing that over a symmetric algebra an indecomposable perfect complex $C$ which is \textit{rigid}, i.e. $\Hom(C,C[1])=0$, must lie on the rim of the quiver. Thus, as a particular case, summands of tilting complexes lie on the rim. This particular case can also be deduced from Rickard's theory \cite{Ric}, given that projective modules lie in the rim (Proposition~\ref{projectives-on-rim}). 

We will deduce our result from the following lemma on Auslander-Reiten triangles, whose analogue for Auslander-Reiten sequences is also true by the same argument. 

\begin{lemma}
\label{zero-composite-lemma}
Let $A\to B\to C\to A[1]$ be an Auslander-Reiten triangle in a Krull-Schmidt triangulated category $\calC$ and suppose that $\Hom(A,C)=0$ Then $B$ is indecomposable, so that $A$ and $C$ lie on the rim of the Auslander-Reiten quiver of $\calC$.
\end{lemma}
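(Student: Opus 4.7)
The plan is to reduce the statement to the single claim that the middle term $B$ is indecomposable. Once this is established, the Auslander--Reiten triangle exhibits a unique irreducible morphism out of $A$ and a unique one into $C$, which characterizes $A$ and $C$ as lying on the rim of their Auslander--Reiten quiver component.

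To show $B$ is indecomposable, I would argue by contradiction: suppose $B=B_1\oplus B_2$ with both summands nonzero, and let $e\colon B\to B$ be the idempotent with image $B_1$. Applying $\Hom(A,-)$ to the triangle yields the exact sequence
$$
\Hom(A,A)\xrightarrow{f_*}\Hom(A,B)\xrightarrow{g_*}\Hom(A,C),
$$
whose rightmost term vanishes by hypothesis. In particular $g_*(ef)=gef=0$, so surjectivity of $f_*$ produces $\phi\in\End(A)$ with $ef=f\phi$. Since $A$ is indecomposable and $\calC$ is Krull--Schmidt, $\End(A)$ is local, so either $\phi$ or $1-\phi$ is a unit.

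In the first case, $f=e(f\phi^{-1})$ factors through the summand inclusion $B_1\hookrightarrow B$; in the second, the analogous identity $(1-e)f=f(1-\phi)$ shows that $f$ factors through $B_2\hookrightarrow B$. Either way, $f$ has one component equal to zero, say it lands entirely in $B_1$. Invoking uniqueness of cones, the original triangle then decomposes as a direct sum of the cone triangle on $f'\colon A\to B_1$ with the trivial triangle $0\to B_2\to B_2\to 0$, forcing $C\cong\mathrm{cone}(f')\oplus B_2$. Indecomposability of $C$ together with $B_2\ne 0$ yields $\mathrm{cone}(f')=0$, so $f'$ is an isomorphism and $f$ is a split monomorphism---contradicting the Auslander--Reiten property. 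The subtlest step is this triangle decomposition; the rest is a routine application of the long exact $\Hom$ sequence combined with the local ring structure of $\End(A)$.
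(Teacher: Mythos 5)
Your proof is correct, and it arrives at the conclusion by a genuinely different route than the paper, though both arguments hinge on the long exact $\Hom(A,-)$-sequence and the locality of $\End(A)$.

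The paper first establishes, \emph{without} using $\Hom(A,C)=0$, the reusable intermediate fact that in an Auslander--Reiten triangle $A\to B_1\oplus B_2\to C$ the two composites $A\to B_i\to C$ cannot both vanish: assuming they did, one shows $\Im(f_*)$ would split as a direct sum of $\End(A)$-submodules, locality of $\End(A)$ then forces one component of $f=\alpha(1_A)$ to vanish, and that contradicts the fact that the component maps $A\to B_i$ are irreducible, hence nonzero. The hypothesis $\Hom(A,C)=0$ then delivers the final contradiction in one line. You instead feed $\Hom(A,C)=0$ in at the start to get surjectivity of $f_*$, lift the idempotent $e$ to $\phi\in\End(A)$ with $ef=f\phi$, and use locality to force one component of $f$ to vanish directly; you then close the argument not by invoking irreducibility of the components but by decomposing the cone of $f$ and observing that $f$ would have to be a split monomorphism, contradicting that the triangle is non-split. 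Both closings are sound; yours is slightly more self-contained since it does not lean on the standard fact that component maps of AR-triangles are irreducible, whereas the paper's version isolates a statement (composites not both zero) whose analogue for Auslander--Reiten sequences the paper explicitly notes is also useful. One small remark: once you know one component of $f$ is zero, the shortest finish is simply that irreducible morphisms are nonzero; your cone-decomposition alternative is fine but a bit longer.
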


\begin{proof}
Suppose to the contrary that $B=B_1\oplus B_2$. We show that it is not possible for both of the composite maps $A\to B_1\to C$ and $A\to B_2\to C$ to be zero as follows. Let $W$ be any object and consider the long exact sequence
$$
\xymatrix@C=10pt{
&&
\hskip3.7em\clap{\dots}\hskip3.7em\ar[r]&\Hom(W,C[-1])
\ar `r[d] `[l] `[llld] `[dll] [dll]\\
&\Hom(W,A) \ar[r]^(.33)\alpha& \Hom(W,B_1)\oplus\Hom(W,B_2) \ar[r] ^(.63){\beta}& \Hom(W,C)
\ar `r[d] `[l] `[llld] `[dll] [dll]\\
&\Hom(W,A[1]) \ar[r] &
\hskip3.7em\clap{\cdots}\hskip3.7em & }
$$
If both composites $A\to B_i\to C$ were zero for $i=1,2$, then both composites $\Hom(W,C)\to\Hom(W,B_i)\to \Hom(W,C)$ would be zero. It would follow that both projections of $\ker\beta=\Im\alpha$ onto the two summands $\Hom(W,B_i)$ lie in $\ker\beta$ and hence
$$
\alpha\Hom(W,A)=(\ker\beta\cap\Hom(W,B_1))\oplus (\ker\beta\cap\Hom(W,B_2))
$$
is a direct sum of $\End(W)$-modules. Take $W=A$. Because $\End(A)$ is local we deduce that $\alpha\Hom(A,A)$ has a unique simple quotient as an $\End A$-module. From this it follows that one of the summands $\ker\beta\cap\Hom(A,B_i)$ is zero and that $\alpha\Hom(A,A)\subseteq\Hom (A,B_j)$ for the other suffix $j$. Now $\alpha(1_A)$ is represented by the two component maps $A\to B_1$ and $A\to B_2$, so one of these must be zero. However, these component maps are irreducible morphisms and are never zero. This contradiction shows that one of the composite maps $A\to B_1\to C$ and $A\to B_2\to C$ is non-zero (and hence both are). We complete the proof by invoking the hypothesis that there is no non-zero map $A\to C$. This further contradiction establishes that $B$ is indecomposable.
\end{proof}

We can now deduce our application to rigid complexes.

\begin{theorem}
\label{rigid-complex-theorem}
Let $C$ be an indecomposable perfect complex for a symmetric algebra $\Lambda$ with the property that $\Hom(C,C[1])=0$ Then $C$ lies on the rim of its component in the Auslander-Reiten quiver.
\end{theorem}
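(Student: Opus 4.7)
The plan is to apply Lemma~\ref{zero-composite-lemma} to an Auslander-Reiten triangle whose right-hand term is $C$ itself. For this I first need the shape of the Auslander-Reiten translate on perfect complexes in the symmetric case. Since $\Lambda$ is symmetric, the Nakayama functor $\nu$ is naturally isomorphic to the identity on $\Lambda$-mod, and hence also on $D^b(\Lambda\hbox{-proj})$. Combined with the general description visible from Happel's construction and from Figure~\ref{AR-quiver-self-injective}, which gives $\tau X\cong \nu X[-1]$ for perfect complexes over a self-injective algebra, we obtain $\tau C\cong C[-1]$, and consequently $\tau C[1]\cong C$.

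With this identification in hand, I would consider the Auslander-Reiten triangle
$$
\tau C\to B\to C\to \tau C[1]
$$
terminating at $C$, which exists by the discussion at the start of Section~\ref{preliminaries-section}. The hypothesis of Lemma~\ref{zero-composite-lemma} that needs to be checked is the vanishing of $\Hom(\tau C,C)$, and under our identification this becomes
$$
\Hom(\tau C,C)=\Hom(C[-1],C)=\Hom(C,C[1]),
$$
which is zero precisely by the rigidity assumption on $C$. Lemma~\ref{zero-composite-lemma} then forces $B$ to be indecomposable.

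To finish I would invoke the $\ZZ A_\infty$ shape of the quiver component, given by Theorem~\ref{a-infinity-component-theorem}: from the diamond/mesh structure in Figure~\ref{AR-quiver-self-injective} one sees that the middle term of the Auslander-Reiten triangle ending at an indecomposable complex $C$ is itself indecomposable exactly when $C$ lies on the rim (for complexes strictly inside the component, the middle term has two indecomposable summands). Hence $C$ is on the rim, as claimed.

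The real content of the theorem has already been packaged into Lemma~\ref{zero-composite-lemma}; the only genuinely algebra-specific point is the identification $\tau\cong [-1]$ for symmetric $\Lambda$, which converts the categorical rigidity condition $\Hom(C,C[1])=0$ into exactly the hypothesis $\Hom(\tau C,C)=0$ required to feed into the lemma. I do not expect any serious obstacle; the main thing to take care of is being explicit about the Nakayama identification so that the conversion between $\Hom(C,C[1])$ and $\Hom(\tau C,C)$ is unambiguous.
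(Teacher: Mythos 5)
Your proof is correct and is essentially the paper's proof: both feed the rigidity hypothesis into Lemma~\ref{zero-composite-lemma} applied to an Auslander-Reiten triangle on $C$, using $\nu\cong\mathrm{id}$ (hence $\tau\cong[-1]$) to turn $\Hom(C,C[1])=0$ into the required vanishing. The only cosmetic difference is that the paper uses the triangle \emph{starting} at $C$, namely $C\to X\to C[1]\to C[1]$, while you use the triangle \emph{terminating} at $C$; these differ by a shift, so the argument is the same, and indeed the final appeal to the $\ZZ A_\infty$ mesh structure is unnecessary since Lemma~\ref{zero-composite-lemma} already asserts the rim conclusion.
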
 

\begin{proof}
The Auslander-Reiten triangle starting at $C$ has the form $C\to X\to C[1]\to C[1]$ since the Nakayama functor is the identity. Because $\Hom(C,C[1])=0$ the result follows from Lemma~\ref{zero-composite-lemma}.
\end{proof}

\end{document}